\documentclass[12pt,a4paper]{amsart} 
\usepackage{hyperref}
\usepackage{mathtools}
\usepackage{amssymb}
\usepackage{mathrsfs}
\usepackage{graphics}
\newtheorem{theorem}{Theorem}[section]
\newtheorem{corollary}[theorem]{Corollary}
\newtheorem{example}[theorem]{Example}

\theoremstyle{definition}

\newtheorem{Remark}[theorem]{Remark}
\numberwithin{equation}{section}

\makeatletter
\@namedef{subjclassname@2020}{%
\textup{2020} Mathematics
Subject Classification}
 \makeatother
\title[Matrix Solutions to the Diophantine equation $X^4+ Y^4=2Z^4$]{ Matrix Solutions to the Diophantine equation $X^4+ Y^4=2Z^4$}

\author{SWADHIN MOHARANA$^{1}$}
\address{Department of Mathematics \\
   Berhampur University\\
   Bhanja Bihar 760007\\
   Ganjam, Odisha\\ India}
\email{swadhinmmath@gmail.com}

 \author{PABITRA KUMAR JENA$^{2}{^*}$}
\address{Department of Mathematics \\
   Berhampur University,
   Bhanja Bihar 760007,
   Ganjam, Odisha, India}
\email{pabitramath@gmail.com} 

\date{}
\thanks{*Corresponding author: Pabitra Kumar Jena, Email: pabitramath@gmail.com \\ The author P.K. Jena receives extra-mural research backing from the Government of Odisha via OSHEC, specifically under the MRIP-2024-Mathematics program (24EM/MT/85).}
\begin{document}

\begin{abstract}
The present study explores integer and rational  matrix solutions for the Diophantine equation $X^{4}+Y^{4}=2Z^{4}$, establishing that infinitely many solutions exist over different matrix rings, including $M_2(\mathbb{Z})$, $M_3(\mathbb{Z})$, $M_4(\mathbb{Z}).$  Furthermore, expanding the scope beyond the integer domain, the analysis demonstrates that an infinite number of matrix solutions can be found in degree-4 and degree-16 field extensions, specifically within $M_4(\mathbb{Q}(\sqrt[4]{l}))$ and $M_4(\mathbb{Q}(\sqrt[4]{l_1}, \sqrt[4]{l_2}))$ when the rational numbers involved lack perfect fourth powers and are not perfect squares.
\end{abstract}

\keywords{Diophantine Equations, Fermat type Equations, Matrix ring.}
\subjclass[2020]{11D25, 11D45, 15A24, 11D99 }\maketitle{}

\section{Introduction and Motivations}
Diophantine equations constitute one of the foundational and most profoundly challenging disciplines within the realm of number theory. Named in honor of the third-century mathematician Diophantus of Alexandria, who pioneered their systematic study in his seminal work \textit{Arithmetica}, a Diophantine equation is fundamentally a polynomial equation with integer coefficients where the explicit goal is to identify solutions restricted to the domain of integers or rational numbers \cite{hardy}. 

Diophantine problems are captivating because their seemingly simple formulations often require immensely complex mathematical strategies to solve. Furthermore, the discrete nature of integers prevents the use of continuous approximation techniques, ensuring that every equation presents an entirely unique challenge \cite{mordell}.

 Researchers traditionally categorize Diophantine equations by degree and variable count, utilizing specific techniques for each class. First-degree equations, like $ax + by = c$, depend on the Euclidean algorithm, possessing solutions only if $c$ is divisible by the greatest common divisor of $a$ and $b$ \cite{hardy}. Second-degree challenges, such as Pell's equation ($x^2 - ny^2 = 1$), always contain infinitely many integer roots discoverable via continued fractions \cite{mordell}. Conversely, higher-degree equations present extreme difficulties, best illustrated by Fermat's Last Theorem, which Andrew Wiles finally proved in 1995 using elliptic curves and Diophantine geometry \cite{wiles, hindry}. While Hilbert's Tenth Problem sought a universal mathematical algorithm to resolve any Diophantine equation, Yuri Matiyasevich proved in 1970 that such a finite, universal method is impossible \cite{matiyasevich}. Because this general undecidability prevents a singular computational approach, modern mathematical research primarily focuses on analyzing the underlying geometry of algebraic varieties to determine solvability \cite{hindry}.

 Although the classic scalar Diophantine equation $x^3+y^3=2z^3$ famously lacks any nontrivial integer solutions, its matrix equivalent—where the variables represent $n \times n$ matrices with integer entries—admits multiple nontrivial solution frameworks. By utilizing the algebraic substitutions $X=P+Q$, $Y=P-Q$, and $Z=P$, researchers can reduce the primary matrix equation into the simplified condition $Q^2P+QPQ+PQ^2=0$. Detailed is assured in  \cite{Garcia}.  Selecting specific companion matrices for $Q$ reveals that non-trivial integer configurations are entirely possible for the matrix formulation of this equation. 

\textbf{Important Mathematical Results} \cite{Garcia}:

\textbf{First Solution Family}: If the substitution variable $Q$ is explicitly defined as the companion matrix of the polynomial $x^2$, yielding $Q=\begin{bmatrix}0&1\\0&0\end{bmatrix}$, then the overarching matrix equation successfully evaluates to zero if and only if the matrix $P$ takes an upper triangular structure.

\textbf{Second Solution Family:}\cite{Garcia} 
When $Q$ is instead chosen to represent the companion matrix for the polynomial $x^2+x+1$, generating $Q=\begin{bmatrix}0&-1\\1&-1\end{bmatrix}$, a valid nontrivial integer matrix solution is guaranteed if and only if $P$ adopts the specific structural pattern $P=\begin{bmatrix}a&b\\a+b&-a\end{bmatrix}$.

Let $M_n(\mathbb Z)$ represents the ring of $ n \times n $ matrices over $\mathbb Z.$ Let us define  $\mathbb{Q}(\sqrt[4]{l})$ and $M_4(\mathbb{Q}(\sqrt[4]{l_1}, \sqrt[4]{l_2}))$ be the finite extension field over rational numbers $\mathbb Q.$ 
\vspace{1em}

In 2026, Mallick and Mishra \cite{Mallick1} investigated non-trivial integer matrix solutions for the Diophantine equation $X^3 + Y^3 = 2Z^3$, expanding upon previous findings that were limited to second-order matrices. By leveraging specialized structural properties of Circulant and Rare matrices—which are subclasses of Toeplitz matrices—the authors construct new infinite families of solutions for matrices of order 2, 3, and arbitrary dimensions $n$. Furthermore, the research broadens the scope of these solutions by applying them to algebraic number fields involving distinct cube-free rationals, demonstrating a deep interplay between matrix algebra and field extensions.

\textbf{Second-Order Matrix Solutions \cite{Mallick1}:} For a specific singular integer matrix $B = \begin{pmatrix} a & b \\ c & -a \end{pmatrix}$ where $a \neq 0$ and $a \neq \pm c$, the matrix relation transformed to $B^2A + BAB + AB^2 = O$ yields valid solutions if and only if the upper triangular companion matrix $A$ takes the explicitly parameterized form $A = \begin{pmatrix} w-ct & at \\ 0 & w \end{pmatrix}$ for arbitrary integers $t$ and $w$.

\textbf{Infinite Solutions in Subsets\cite{Mallick1}:} There exist infinitely many matrix solutions to the equation $X^3 + Y^3 = 2Z^3$ across specific matrix domains, notably within a custom closed set $\mathbb{H}$ of $2 \times 2$ matrices defined by a column-sum invariant ($a+c=b+d$), and within the broader class of third-order integer circulant matrices.

\textbf{Third-Order Solutions via Trace Conditions\cite{Mallick1}:} When $B$ is a strictly superdiagonal matrix of order 3, valid solutions for the associated matrix $A$ require that $A$ is a $3 \times 3$ integer matrix having a trace of zero, alongside specifically constrained lower-row entries reflecting $g=0$ and $h=-d$.

\textbf{Solutions Over Number Fields \cite{Mallick1}:} The existence of infinitely many matrix solutions for $X^3 + Y^3 = 2Z^3$ extends beyond standard integers into matrices evaluated over algebraic number fields of degree 3 and 9, specifically $M_3(\mathbb{Q}(\sqrt[3]{d}))$ and $M_3(\mathbb{Q}(\sqrt[3]{d_1}, \sqrt[3]{d_2}))$ for distinct, cube-free rational numbers $d, d_1,$ and $d_2$.

Researchers in  \cite{Mallick2} further explore the solvability of specific matrix Diophantine equations by establishing direct mathematical connections with their scalar counterparts over particular number fields. The authors demonstrate that finding a valid set of $2 \times 2$ integer matrices within a defined class $G_2(d)$ for the equation $X^n+Y^n=n!Z^n$ is algebraically equivalent to solving the corresponding scalar equation $\theta^n+\eta^n=n!\omega^n$ within the quadratic field $\mathbb{Q}(\sqrt{d})$. Furthermore, the study introduces a determinant-restricted matrix family, $G_2(d,l)$, to conclusively prove that the Fermat-like equation $X^n+Y^n=2Z^n$ possesses no non-trivial solutions when the exponent is strictly greater than two. 

\begin{theorem}\cite{Mallick2}
For any integer $n > 1$, the matrix Diophantine equation $X^n+Y^n=n!Z^n$ successfully yields a solution within the specialized $2 \times 2$ matrix set $G_2(d)$ if and only if there are specific scalar values $\theta, \eta, \omega \in \mathbb{Q}(\sqrt{d})$ that satisfy the algebraic relation $\theta^n+\eta^n=n!\omega^n$.    
\end{theorem}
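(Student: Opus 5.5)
The set $G_2(d)$ is not defined in the text above. I will work with the natural reading from \cite{Mallick2}: $G_2(d)$ is the set of $2\times 2$ rational matrices of the form
\[
M(a,b)=\begin{bmatrix} a & db\\ b & a\end{bmatrix},\qquad a,b\in\mathbb{Q},
\]
with $d$ a non-square, and a solution is \emph{nontrivial} when $XYZ\neq 0$. The plan is to identify $G_2(d)$ with the field $\mathbb{Q}(\sqrt d)$ by a ring isomorphism. The equivalence then becomes a transport of the relation $X^n+Y^n=n!Z^n$ across that isomorphism.

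\textbf{Step 1 (the isomorphism).} Define $\phi:\mathbb{Q}(\sqrt d)\to G_2(d)$ by $\phi(a+b\sqrt d)=M(a,b)$. I will check three facts:
\begin{itemize}
\item $\phi$ is additive, which is immediate;
\item $\phi(1)=I$;
\item $\phi$ is multiplicative, by computing $M(a,b)M(c,e)=M(ac+dbe,\ ae+bc)$ and comparing with $(a+b\sqrt d)(c+e\sqrt d)$.
\end{itemize}
Injectivity is clear, and surjectivity holds by construction. An equivalent view is that $M(a,b)=aI+bJ$ with $J=\begin{bmatrix}0&d\\1&0\end{bmatrix}$ and $J^2=dI$. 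Hence $G_2(d)=\mathbb{Q}[J]\cong\mathbb{Q}[x]/(x^2-d)$, which is a field because $d$ is not a square.

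\textbf{Step 2 (transporting the equation).} Since $\phi$ is a unital ring isomorphism, $\phi(\alpha^n)=\phi(\alpha)^n$, and $\phi$ commutes with multiplication by the integer scalar $n!$.

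For one direction, suppose $\theta,\eta,\omega\in\mathbb{Q}(\sqrt d)$ satisfy $\theta^n+\eta^n=n!\omega^n$. Applying $\phi$ gives
\[
X=\phi(\theta),\quad Y=\phi(\eta),\quad Z=\phi(\omega)\in G_2(d)
\]
with $X^n+Y^n=n!Z^n$.

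For the other direction, given a solution $X,Y,Z\in G_2(d)$, apply $\phi^{-1}$ to recover a scalar solution in $\mathbb{Q}(\sqrt d)$.

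Nontriviality is preserved in both directions, since $\phi(\alpha)=0$ if and only if $\alpha=0$. Because $G_2(d)$ is a field, a nonzero matrix in it is automatically invertible, so the same holds if nontriviality is phrased in terms of invertibility.

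\textbf{Main obstacle.} The only real issue is the exact definition of $G_2(d)$ in \cite{Mallick2}.
\begin{itemize}
\item If it requires integer entries, then $\phi$ maps onto the order $\mathbb{Z}[\sqrt d]$ rather than onto $\mathbb{Q}(\sqrt d)$. I would then clear denominators. The equation is homogeneous of degree $n$, so a rational solution $(\theta,\eta,\omega)$ can be multiplied by a common denominator $N$ to land in $\mathbb{Z}[\sqrt d]$, and $(N\theta)^n+(N\eta)^n=n!(N\omega)^n$ still holds. Thus solvability over the field and over the order coincide.
\item If $G_2(d)$ is instead defined by a commuting-with-$J$ or trace/determinant condition, I would first show that it equals $\mathbb{Q}[J]$. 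This uses the fact that the centralizer of a non-scalar $2\times2$ matrix is the polynomial algebra it generates.
\end{itemize}
After this identification, Steps 1 and 2 apply unchanged.
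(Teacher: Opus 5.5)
The paper gives no proof of this statement. It is quoted from \cite{Mallick2} as background, and $G_2(d)$ is never defined in the text, so there is no in-paper argument to compare yours with.

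Judged on its own, your proof is correct under your reading $G_2(d)=\{aI+bJ\}$, where $J=\begin{pmatrix}0&d\\1&0\end{pmatrix}$, $J^2=dI$ and $d$ is not a square. The map $a+b\sqrt d\mapsto aI+bJ$ is a unital ring isomorphism; it is well defined precisely because $\sqrt d\notin\mathbb{Q}$. So $X^n+Y^n=n!Z^n$ transports in both directions, and zero corresponds to zero. Homogeneity of the equation handles an integer-entry version of $G_2(d)$ by clearing denominators.

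You are right to build in a nontriviality condition. As the statement is worded in the paper, both sides of the equivalence are satisfied by $X=Y=Z=0$ and $\theta=\eta=\omega=0$, so it has no content otherwise.

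One point to keep explicit is that the matrix-to-scalar direction depends entirely on $X,Y,Z$ lying in a single commutative subalgebra isomorphic to $\mathbb{Q}(\sqrt d)$. Your centralizer remark covers definitions that reduce to $\mathbb{Q}[J]$. It does not cover a broader, non-commutative choice of set, such as all matrices with eigenvalues in $\mathbb{Q}(\sqrt d)$. There, non-commuting solutions would need a different argument, and the equivalence could fail. Your identification of $G_2(d)$ with $\mathbb{Q}[J]$ is the reading under which the stated theorem makes sense.
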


\begin{theorem}\cite{Mallick2}
Given an arbitrary number field $K$ and a natural number $n \in \mathbb{N}$, if the scalar elements $p, q, r \in K$ solve the relation $p^{2n}+q^{2n}=n!r^{2n}$, then the specifically structured matrices $P$, $Q$, and $R$—constructed using those respective scalars in an anti-diagonal format—will intrinsically satisfy the elevated matrix equation $P^{4n}+Q^{4n}=n!R^{4n}$.   
\end{theorem}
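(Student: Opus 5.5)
The matrices $P,Q,R$ are not pinned down here, so I will take the natural choice
\[
P=\begin{pmatrix}0&p\\ p&0\end{pmatrix},\quad Q=\begin{pmatrix}0&q\\ q&0\end{pmatrix},\quad R=\begin{pmatrix}0&r\\ r&0\end{pmatrix}\in M_2(K).
\]
The plan is to compute the powers of an anti-diagonal matrix directly and reduce the matrix identity to a scalar identity.

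First, for $A=\begin{pmatrix}0&a\\ b&0\end{pmatrix}$ we have $A^2=ab\,I_2$. For the symmetric choice $a=b=p$ this gives $P^2=p^2 I_2$, and by induction $P^{2m}=p^{2m}I_2$ for every $m\ge 1$. Taking $m=2n$ yields
\[
P^{4n}=p^{4n}I_2,\qquad Q^{4n}=q^{4n}I_2,\qquad R^{4n}=r^{4n}I_2 .
\]
So the claim $P^{4n}+Q^{4n}=n!\,R^{4n}$ is equivalent to the scalar relation $p^{4n}+q^{4n}=n!\,r^{4n}$ in $K$.

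The hypothesis, however, is $p^{2n}+q^{2n}=n!\,r^{2n}$, with exponent $2n$ rather than $4n$. Squaring gives $p^{4n}+2p^{2n}q^{2n}+q^{4n}=(n!)^2r^{4n}$, which is not the relation we need. This mismatch is the main obstacle.

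The fix I would try first is to build the matrices so that the $4n$-th matrix power produces the $2n$-th scalar power. Set
\[
P=\begin{pmatrix}0&1\\ \sqrt{p}&0\end{pmatrix}\quad\text{(or } \begin{pmatrix}0&p\\ 1&0\end{pmatrix}\text{)}.
\]
The second form avoids leaving $K$, because then $P^2=p\,I_2$. Consequently $P^{4n}=p^{2n}I_2$, and likewise $Q^{4n}=q^{2n}I_2$ and $R^{4n}=r^{2n}I_2$. Then
\[
P^{4n}+Q^{4n}-n!R^{4n}=(p^{2n}+q^{2n}-n!r^{2n})I_2=0,
\]
which is exactly the hypothesis. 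I expect the intended anti-diagonal construction in \cite{Mallick2} is of this type: one entry is the scalar and the other is $1$, so that $\det$-type products give the right exponent.

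So the key steps are, in order:
\begin{enumerate}
\item Fix the anti-diagonal form $\begin{pmatrix}0&s\\ 1&0\end{pmatrix}$ for $s\in\{p,q,r\}$.
\item Verify $A^2=sI_2$.
\item Raise to the power $2n$ to get $A^{4n}=s^{2n}I_2$.
\item Sum the three powers and apply the hypothesis.
\end{enumerate}
If instead the paper's matrices have $p$ in both anti-diagonal slots, the statement as written would need the scalar hypothesis with exponent $4n$. In that case I would check whether the intended hypothesis was $p^{4n}+q^{4n}=n!r^{4n}$, or equivalently restate the conclusion as $P^{2n}+Q^{2n}=n!R^{2n}$. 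The computation above covers either reading.
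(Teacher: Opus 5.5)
The paper gives no proof of this statement; it only quotes it from the cited work. Your argument is correct and is the natural one. With $P=\begin{pmatrix}0&p\\1&0\end{pmatrix}$, the companion matrix of $x^2-p$, you get $P^2=pI_2$ and $P^{4n}=p^{2n}I_2$, so the matrix identity collapses to $(p^{2n}+q^{2n}-n!\,r^{2n})I_2=0$. This is the same ``power equals a scalar matrix'' device the paper itself uses for $C_l$ and $L_l$ in Section~4.

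One slip to fix. Your first parenthetical option $\begin{pmatrix}0&1\\ \sqrt{p}&0\end{pmatrix}$ squares to $\sqrt{p}\,I_2$, so it gives $P^{4n}=p^{n}I_2$ rather than $p^{2n}I_2$; you would need $\sqrt{p}$ in both slots. Since you carry out the proof with $\begin{pmatrix}0&p\\1&0\end{pmatrix}$, this does not affect the argument.

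Your remark that the symmetric choice $\begin{pmatrix}0&p\\p&0\end{pmatrix}$ cannot work is also right. For $n=1$ and $(p,q,r)=(3,4,5)$ it would demand $3^4+4^4=5^4$, which is false.
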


\begin{theorem}\cite{Mallick2}
 Whenever the matrices $X, Y,$ and $Z$ are selected from the determinant-fixed subset $G_2(d,l)$, the Fermat-type matrix equation $X^n+Y^n=2Z^n$ is strictly restricted to only trivial solutions for any integer exponent $n > 2$.    
\end{theorem}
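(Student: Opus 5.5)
The plan is to move the matrix equation into the commutative algebra that $G_2(d)$ spans, and then use the fixed determinant through the conjugation (adjugate) map. I assume, as the notation suggests, that $G_2(d)$ consists of the matrices $\begin{bmatrix}a&db\\ b&a\end{bmatrix}$ and that $G_2(d,l)$ is the subset with determinant $l\neq 0$. For this family:
\begin{itemize}
\item the map $a+b\sqrt d\mapsto \begin{bmatrix}a&db\\ b&a\end{bmatrix}$ is the ring isomorphism behind the first theorem quoted from \cite{Mallick2};
\item the determinant corresponds to the norm $N(a+b\sqrt d)=a^2-db^2$;
\item Galois conjugation $\sqrt d\mapsto-\sqrt d$ corresponds to the adjugate $A\mapsto \operatorname{adj}(A)=\det(A)\,A^{-1}$.
\end{itemize}
To avoid worrying about whether $d$ is a square, I will argue directly with matrices. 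The only facts needed are these: $G_2(d)$ is a commutative ring, and on it $\operatorname{adj}$ is additive and multiplicative with $A\operatorname{adj}(A)=\det(A)I$.

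\textbf{Step 1 (normalise).} Let $X,Y,Z\in G_2(d,l)$ with $X^n+Y^n=2Z^n$, and set $U=XZ^{-1}$ and $V=YZ^{-1}$. These lie in the fraction algebra of $G_2(d)$, commute, have determinant $1$, and satisfy $U^n+V^n=2I$. Determinant $1$ means $\operatorname{adj}(U)=U^{-1}$ and $\operatorname{adj}(V)=V^{-1}$.

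\textbf{Step 2 (apply conjugation).} Apply $\operatorname{adj}$ to $U^n+V^n=2I$. Using additivity and multiplicativity of $\operatorname{adj}$ on this commutative algebra, we get $U^{-n}+V^{-n}=2I$. Multiplying by $U^nV^n$ gives $U^n+V^n=2U^nV^n$, and therefore $U^nV^n=I$.

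\textbf{Step 3 (conclude).} Now $U^n$ and $V^n$ commute, have sum $2I$ and product $I$. Hence $U^n$ satisfies $(U^n)^2-2U^n+I=0$, that is, $(U^n-I)^2=0$. This is the main obstacle: showing that the nilpotent $U^n-I$ actually vanishes. It needs a separate argument when $d$ is a perfect square, or when $d=0$, where the algebra $\mathbb Q[\sqrt d]$ has zero divisors or nilpotents.
\begin{itemize}
\item If $d$ is not a square, the algebra is a field, so $U^n=I$ immediately.
\item If $d=m^2$, diagonalise simultaneously over $\mathbb Q$ via $a\pm mb$. The algebra becomes $\mathbb Q\times\mathbb Q$, which has no nonzero nilpotents, so again $U^n=I$.
\item The degenerate case $d=0$ must be excluded, or checked by direct computation, since there $b$ can be nilpotent.
\end{itemize}

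From $U^n=V^n=I$ we get $X^n=Y^n=Z^n$, which is exactly the trivial solution, and the argument is complete. Step 2 is where the determinant restriction is essential: without it, $\operatorname{adj}$ only gives $\det(X)^nX^{-n}+\cdots$, and the cancellation collapses. I would also remark that the hypothesis $n>2$ is not used in this argument.
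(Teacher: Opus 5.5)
The paper does not prove this statement; it only quotes it from \cite{Mallick2}, so there is no in-paper argument to compare yours with. Judged on its own terms, your proof is correct under your reading of the definitions: $G_2(d)=\{\begin{pmatrix}a&db\\ b&a\end{pmatrix}\}$, $G_2(d,l)$ the determinant-$l$ slice, and $l\neq 0$.

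The key facts hold. For $2\times 2$ matrices, $\mathrm{adj}(A)=\mathrm{tr}(A)I-A$. This map is linear, fixes scalars, and lies in the algebra generated by $A$. It therefore restricts to a ring automorphism of your commutative algebra, and Steps 1--3 go through as written. The nilpotent step can also be handled uniformly, without splitting into field and split cases. If $W=\begin{pmatrix}p&dq\\ q&p\end{pmatrix}$ satisfies $W^2=0$, the off-diagonal entries give $pq=0$ and the diagonal gives $p^2+dq^2=0$. This forces $W=0$ as soon as $d\neq 0$.

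Two points should be sharpened.

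\textbf{The case $d=0$.} This is not a case to be ``checked''; the statement is false there. Take $l=1$, $X=I$, $Y=\begin{pmatrix}1&0\\2&1\end{pmatrix}$ and $Z=\begin{pmatrix}1&0\\1&1\end{pmatrix}$. Then $X^n+Y^n=2Z^n$ for every $n$, while $X^n\neq Z^n$. So $d\neq 0$ is a genuine hypothesis. It is automatic if, as the quoted theorems suggest, $\mathbb{Q}(\sqrt d)$ is meant to be a quadratic field.

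\textbf{The meaning of ``trivial''.} Your conclusion $X^n=Y^n=Z^n$ is exactly the right notion. One cannot hope for $X=Y=Z$: with $n$ even, $X=-I$, $Y=Z=I$ solves the equation in $G_2(d,1)$.

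\textbf{Comparison with a direct route.} A more computational argument, which the fixed-determinant hypothesis invites, avoids inverses altogether. Write $X^n,Y^n,Z^n$ with parameters $(a_i,b_i)$, so that $a_i^2-db_i^2=l^n$ for each $i$. Substituting $Y^n=2Z^n-X^n$ into $\det Y^n=l^n$ gives $a_1a_3-db_1b_3=l^n$. Hence $\det(X^n-Z^n)=(a_1-a_3)^2-d(b_1-b_3)^2=0$. For non-square $d$ this forces $X^n=Z^n$, and likewise $Y^n=Z^n$.

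That version stays inside integer matrices and takes two lines. Yours is more conceptual: it reads the result as ``conjugation inverts norm-one elements.'' It also needs only that the algebra be reduced, so it covers nonzero square $d$ without modification. Both arguments make clear that the hypothesis $n>2$ plays no role.
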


Drawing upon a broad base of theoretical  analysis on  Diophantine equations, as well as their generalizations \cite{Mallick1, Mallick2}, this article  presents the new expansions of integer matrix solutions for the Diophantine equation $X^{4}+Y^{4}=2Z^{4}$, establishing that infinitely many solutions exist over different matrix rings, including $M_2(\mathbb{Z})$, $M_3(\mathbb{Z})$, $M_4(\mathbb{Z})$ and $M_4(\mathbb{Q}(\sqrt[4]{l}))$ and $M_4(\mathbb{Q}(\sqrt[4]{l_1}, \sqrt[4]{l_2}))$.

\section{Integer matrix Solution of order $\texorpdfstring{2 \times 2}{2 x 2}$ to Diophantine equation}\label{section3}
The Diophantine equation 
\begin{equation}\label{e1}
 u^4+ v^4=2t^4  
\end{equation} enters into the picture when one arises fourth powers in arithmetic progression: if $u^4 < t^4 < v^4$  are in arithmetic progression, then $v^4- t^4= t^4- u^4$ which produces the equation \ref{e1}. This equation \ref{e1} has no integer solutions besides the trivial solution $\pm u=\pm v=\pm t.$ We think about matrix analogue $X^4+Y^4=Z^4$ of the equation  \ref{e1}.   
We are looking for solution to the diophantine equation\begin{equation}\label{a}
    X^4+Y^4=2Z^4
\end{equation}in the form \cite{Garcia} of X=U+V,Y=U-V and Z=U, then above equation \ref{a} reduces to \begin{equation}\label{b}
   U^2V^2+UVUV+UV^2U+VU^2V+VUVU+V^2U^2+V^4=0.
\end{equation}
\begin{theorem} \label{thm1}
 Let $0 \ne u\in \mathbb Z, u\ne \pm t \in \mathbb Z$ and  $V=\begin{pmatrix}
     u&v\\t&-u
 \end{pmatrix} \in M_2(\mathbb Z)$ with $det(V)=0$, then equation \ref{b} has solutions if and only if the matrix U is of the form    $U=\begin{pmatrix}
     x&y\\z&w
 \end{pmatrix}\in M_2(\mathbb Z)$ where $x=w=k,y=-nv,z=nt,$ and
 $n,k\in \mathbb Z.$

 \begin{proof}
  Since  $V=\begin{pmatrix}
     u&v\\t&-u
 \end{pmatrix}$ implies  $V^2=\begin{pmatrix}
     u^2+vt&0\\0&u^2+vt
 \end{pmatrix}=\begin{pmatrix}
     0&0\\0&0
 \end{pmatrix},$ then equation \ref{b} reduces to \begin{equation}\label{c}
     UVUV+VU^2V+VUVU=0
 \end{equation} which implies \begin{equation}
     (UV+VU)^2=0.
 \end{equation} So its trace and determinant are zero. Let's begin with $U=\begin{pmatrix}
     x&y\\z&w
 \end{pmatrix}\in M_2(\mathbb Z)$  that  computes $UV+VU=\begin{pmatrix}
     2ux+ty+vz&xv+vw\\tw+tx&vz+ty-2uw
 \end{pmatrix}.$ Now, from $trace(UV+VU)=0,$ we have $ux+ty+vz-uw=0.$\\

  If we choose $w=x$ we have $ty+vz=0 \implies y=-\frac{v}{t}z.$ In order to make $y,z$ integers, we must have to take $z=nt,n\in \mathbb Z$ so that $y=-nv.$ Hence,  $U=\begin{pmatrix}
     x&y\\z&w
 \end{pmatrix}\in M_2(\mathbb Z)$ where $w=x=k,z=nt,y=-nv,n,k\in \mathbb Z.$ \\
   Conversely, let $U=\begin{pmatrix}
     x&y\\z&w
 \end{pmatrix}\in M_2(\mathbb Z)$ where $w=x=k,z=nt,y=-nv,n\in \mathbb Z.$ then $UV+VU=\begin{pmatrix}
     2uk&2vk\\2tk&-2uk 
 \end{pmatrix}.$ So computing $(UV+VU)^2=\begin{pmatrix}
     4k^2(u^2+vt)&0\\0&4k^2(u^2+vt)
 \end{pmatrix}=\begin{pmatrix}
     0&0\\0&0
 \end{pmatrix}$ as $det(V)=0.$ Hence the result follows.
    
 \end{proof}
\end{theorem}

\begin{example}
    Let $V=\begin{pmatrix}
        2&-1\\-4&-2\\
    \end{pmatrix}$ then $U=\begin{pmatrix}
        3&-2\\-8&3
    \end{pmatrix}$ satisfies  the equation  \ref{b}.So X=U+V=$\begin{pmatrix}
        5&-1\\-12&1\\
    \end{pmatrix},$ Y=U-V=$\begin{pmatrix}
        1&-3\\-4&5
    \end{pmatrix},$ Z=U=$\begin{pmatrix}
        3&-2\\-8&3
    \end{pmatrix}$ is solution to equation  \ref{a}.
\end{example}

\begin{corollary}
 Let V be a matrix of the form $V=\begin{pmatrix}
     u&v\\t&-u
 \end{pmatrix} \in M_2(\mathbb Z)$ with $u\ne0,u\ne \pm t$ and $det(V)=0$,then equation \ref{b} has solutions if and only if the matrix U is of the form  $U=\begin{pmatrix}
     x&0\\0&w
 \end{pmatrix}\in M_2(\mathbb Z)$ where $x=w=k$ where $k\in \mathbb Z.$
\end{corollary}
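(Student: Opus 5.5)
The plan is to obtain the corollary as the diagonal specialization of Theorem~\ref{thm1}, recomputing directly rather than quoting the theorem. I read the statement as characterizing the \emph{diagonal} solutions $U$. Without that restriction the ``only if'' direction cannot hold: Theorem~\ref{thm1} with $n\neq 0$ gives non-diagonal solutions.

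The first step is the same reduction as in the proof of Theorem~\ref{thm1}. Since $V$ has trace $0$ and $\det(V)=0$, Cayley--Hamilton gives $V^2=0$. Every term of equation~\ref{b} that contains $V^2$ then vanishes. What survives is $UVUV+VU^2V+VUVU$, and this equals $(UV+VU)^2$, because the fourth term $UV^2U$ of that square is zero. So equation~\ref{b} is equivalent to $N^2=0$, where $N=UV+VU$. For a $2\times 2$ matrix, Cayley--Hamilton gives $N^2=\operatorname{tr}(N)N-\det(N)I$. Hence $N^2=0$ holds if and only if $\operatorname{tr}N=0$ and $\det N=0$; the nontrivial direction uses that $N^2=0$ forces both eigenvalues of $N$ to be zero.

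Next take $U=\begin{pmatrix} x&0\\0&w\end{pmatrix}$ and compute
\[
N=\begin{pmatrix} 2ux & v(x+w)\\ t(x+w) & -2uw\end{pmatrix}.
\]
The trace condition is $2u(x-w)=0$. Since $u\neq 0$, this forces $x=w$, which proves necessity. For sufficiency, put $x=w=k$. The trace is then $0$, and
\[
\det N=-4u^2k^2-4vtk^2=-4k^2(u^2+vt)=4k^2\det(V)=0.
\]
So $N^2=0$ and equation~\ref{b} holds. Equivalently, this is Theorem~\ref{thm1} with $n=0$.

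The only delicate point is the scope of the ``if and only if''. I would state explicitly that the equivalence is among diagonal $U$, which is exactly what the trace computation shows. The hypothesis $u\neq\pm t$ plays no role in this argument.
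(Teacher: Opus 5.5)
Your proof is correct and follows essentially the paper's route. The paper's one-line proof puts $y=z=0$ into the computation of Theorem~\ref{thm1}, so the trace condition $ux+ty+vz-uw=0$ collapses to $u(x-w)=0$ and $u\neq 0$ gives $x=w$; the converse is the case $n=0$ of that theorem. Your caveat on scope is also right and worth keeping: the matrices of Theorem~\ref{thm1} with $n\neq 0$, or simply $U=V$, are non-diagonal solutions because $vt=-u^2\neq 0$, so the ``if and only if'' can only hold among diagonal $U$.
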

\begin{proof}
 If we let y,z=0 in theorem \ref{thm1}, then we have x=w so $U=\begin{pmatrix}
     x&0\\0&w
 \end{pmatrix}\in M_2(\mathbb Z)$ where $x=w=k,k \in \mathbb Z.$    
\end{proof}

\begin{corollary}
   Let V be a matrix of the form $V=\begin{pmatrix}
     u&v\\t&-u
 \end{pmatrix} \in M_2(\mathbb Z)$ with $u\ne0,u\ne \pm t$ and $det(V)=0$,then equation \ref{b} has infinitely many solutions if and only if the matrix U is of the form $U=\begin{pmatrix}
     x&y\\0&w
 \end{pmatrix}\in M_2(\mathbb Z)$ where $x=t,w=-t,y=-2u.$  
\end{corollary}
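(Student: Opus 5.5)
The plan is to reduce everything, exactly as in the proof of Theorem~\ref{thm1}, to the condition $(UV+VU)^2=0$. This is legitimate because $\operatorname{tr}(V)=0$ and $\det(V)=0$ force $V^2=0$, so equation~\ref{b} collapses to $(UV+VU)^2=0$. I would then specialise $U=\begin{pmatrix} x&y\\0&w\end{pmatrix}$, i.e.\ $z=0$, in the formula for $UV+VU$ already computed there. A preliminary remark: $t\neq 0$. Indeed, if $t=0$ then $\det V=-u^2=0$, contradicting $u\neq 0$.

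\textbf{The ``if'' direction.} Put $x=t$, $w=-t$, $y=-2u$, $z=0$. The four entries of $UV+VU$ are
\[
2ut+t(-2u),\qquad v(t-t),\qquad t(-t+t),\qquad t(-2u)+2ut ,
\]
and all of them vanish. Hence $UV+VU=O$, so a fortiori $(UV+VU)^2=O$ and equation~\ref{b} holds.

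For ``infinitely many'' I would use two observations.
\begin{itemize}
\item Replacing $U$ by $mU+kI$ with $m,k\in\mathbb Z$ gives $UV+VU\mapsto m(UV+VU)+2kV$, whose square is $4k^2V^2=O$. So every such matrix is again a solution.
\item Alternatively, for fixed $U$ one can vary $V$ through the infinite family $u=ab$, $v=b^2$, $t=-a^2$ (with $a\neq\pm b$ so that $u\neq\pm t$), each of which has $\det V=0$.
\end{itemize}
Either observation yields infinitely many triples $X=U+V$, $Y=U-V$, $Z=U$ solving equation~\ref{a}.

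\textbf{The ``only if'' direction.} Assume $U$ is upper triangular and $(UV+VU)^2=0$. The first step is the trace condition from the proof of Theorem~\ref{thm1}: $2u(x-w)+2ty=0$, so $y=-u(x-w)/t$. Substituting this back, the entries of $UV+VU$ become
\[
u(x+w),\qquad v(x+w),\qquad t(x+w),\qquad -u(x+w),
\]
so $UV+VU=(x+w)V$. The determinant condition is then automatic, since $(UV+VU)^2=(x+w)^2V^2=0$. Hence the solutions are exactly the upper triangular $U$ with $ty=-u(x-w)$.

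The stated matrix is the representative of this family obtained by the normalisation $x+w=0$ (equivalently $UV+VU=O$) and $x-w=2t$, which is the choice making $y=-2u$ integral for every admissible $V$. Every other solution is then of the form $\lambda U+kI$ with $\lambda=(x-w)/(2t)$ and $k=(x+w)/2$, where $U$ is the stated matrix.

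\textbf{Main obstacle.} This last identification is where I expect the difficulty. As literally worded, the converse cannot mean that $x=t$ exactly, since the scalar shifts and multiples above are also solutions. So I would prove the ``only if'' part in the form ``$U$ is, up to scaling and adding a scalar matrix, the stated matrix''. I would state explicitly that this is the sense in which the family $x=t$, $w=-t$, $y=-2u$ exhausts the upper triangular solutions.
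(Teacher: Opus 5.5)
Your proof is correct, and it takes a more complete route than the paper. The paper imposes the ansatz $z=0$, $w=-x$ from the start, uses only the trace condition from Theorem~\ref{thm1} to get $ty=-2ux$, and then simply \emph{chooses} $x=t$. Its ``only if'' part is therefore a derivation of one admissible $U$, not a proof of necessity. The ``converse'' it calls easy is exactly your direct check that $UV+VU=O$.

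Your extra step is the identity $UV+VU=(x+w)V$ once the trace vanishes. It makes the determinant condition automatic and gives the whole upper-triangular solution set $ty=-u(x-w)$, which is what justifies your reading of the statement.

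Your objection to the literal wording is sound, and it applies to the paper's own argument as well. The claim ``we have to choose $x=t$'' fails even inside the ansatz $w=-x$: for $u=2$, $v=-1$, $t=4$, the matrix $\begin{pmatrix}1&-1\\0&-1\end{pmatrix}$ already gives $UV+VU=O$. Here $\lambda=1/4$, so you were right to let $\lambda$ be rational in your normal form $\lambda U+kI$.

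Two refinements. First, if you run your substitution without setting $z=0$, you get $UV+VU=(x+w)V$ whenever $u(x-w)+ty+vz=0$. So the solutions of \ref{b} for this $V$ are precisely the $U$ with $\operatorname{tr}(UV)=0$. These include non-triangular matrices, such as those of Theorem~\ref{thm1} with $z=nt\neq 0$. Hence upper triangularity must be stated as a hypothesis of your corrected converse, as you tacitly assume, not derived as a conclusion. Second, in your second ``infinitely many'' observation you cannot hold $U$ fixed while varying $V$, because the stated matrix depends on $(u,t)$; you must vary the pair $(U,V)$ together.
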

\begin{proof}
    If we choose $w=-x, z=0$, then it follows from theorem \ref{thm1} that  $2ux+ty=0\implies ty=-2ux.$ For integer solutions we have to choose $x=t$ then $y=-2u.$ Hence  $U=\begin{pmatrix}
     x&y\\0&w
 \end{pmatrix}\in M_2(\mathbb Z)$ where $x=t,w=-t,y=-2u.$ Converse part can be proved easily. 
\end{proof}

\begin{Remark}
  If $V=\begin{pmatrix}
      0&k\\0&0
  \end{pmatrix}$ then the equation \ref{b} has solution iff U is of the form $\begin{pmatrix}
      u&v\\0&r
  \end{pmatrix},$ where $k,u,v,r\in \mathbb Z.$  
\end{Remark}
    
\begin{Remark}
    If $V=\begin{pmatrix}
      k&k\\k&k
  \end{pmatrix}$ then equation \ref{b} has solution iff U is of the form $\begin{pmatrix}
      u&-u\\-u&u
  \end{pmatrix},$ where $k,u\in \mathbb Z.$  
\end{Remark}

\begin{theorem}
    The  Diophantine equation $X^4+Y^4=2Z^4$ has infinitely many solutions in $M_2(\mathbb Z)$. 
\end{theorem}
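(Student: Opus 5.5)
Our plan is to build an explicit infinite family of solutions from Theorem \ref{thm1}, using the substitution $X=U+V$, $Y=U-V$, $Z=U$. Expanding $(U+V)^4+(U-V)^4$, the odd-degree terms in $V$ cancel. What remains is $2U^4$ plus twice the sum of all words of degree two in $V$ (the six terms on the left of \eqref{b} besides $V^4$), plus $2V^4$. So $X^4+Y^4=2Z^4$ holds exactly when \eqref{b} holds, and it suffices to produce infinitely many distinct pairs $(U,V)\in M_2(\mathbb Z)^2$ satisfying \eqref{b}.

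\textbf{Choice of $V$.} We take the one-parameter family
\[
V_m=\begin{pmatrix} m & -1\\ m^2 & -m\end{pmatrix}, \qquad m\in\mathbb Z,\ m\ge 2 .
\]
In the notation of Theorem \ref{thm1} this is $u=m$, $v=-1$, $t=m^2$. The hypotheses hold: $u\neq 0$, and $u\neq\pm t$ since $m\neq \pm m^2$ for $m\ge 2$. Moreover $\det V_m=-m^2+m^2=0$, so $V_m^2=0$ by the computation in the proof of Theorem \ref{thm1}.

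\textbf{Choice of $U$.} Theorem \ref{thm1} then says that every matrix
\[
U=\begin{pmatrix} k & n\\ nm^2 & k\end{pmatrix}, \qquad n,k\in\mathbb Z,
\]
satisfies \eqref{b}. The converse computation there is direct: $UV+VU=2kV$, so $(UV+VU)^2=4k^2V^2=0$. For the reduction from \eqref{b} to \eqref{c}, note that every term of \eqref{b} containing $V^2$ as a factor vanishes. The remaining terms are $UVUV+VU^2V+VUVU$, and these equal $(UV+VU)^2$ because the cross terms $UV\,VU$ and $VU\,UV$ contain $V^2$. Hence
\[
X=U+V_m,\qquad Y=U-V_m,\qquad Z=U
\]
solve $X^4+Y^4=2Z^4$ for all integers $m\ge2$ and $n,k$.

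\textbf{Infinitude and nontriviality.} The triples are pairwise distinct, since for example $Z=U$ already runs over infinitely many matrices as $k$ varies. To rule out degenerate solutions, we note $X-Y=2V_m\neq 0$, so $X\neq Y$. Taking $k\neq0$ gives $Z\neq0$. Also $\pm X\neq \pm Z$, because $X\mp Z\in\{V_m,\,2U+V_m\}$: the first is nonzero, and the second is nonzero when $k\neq0$ because its trace is $4k$. The same reasoning applies to $Y$.

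The only real obstacle is checking the algebraic reduction from $X^4+Y^4=2Z^4$ to \eqref{b}, and from there to \eqref{c}, in the noncommutative setting. This amounts to bookkeeping all sixteen words of length four in $U,V$, which is why we would state those cancellations explicitly.
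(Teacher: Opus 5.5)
Your proof is correct, but it takes a different route from the paper. The paper does not use the substitution $X=U+V$, $Y=U-V$, $Z=U$ for this statement. It writes down one triple $R,S,T$ with $R^2=S^2=T^2=2I$, so that $R^4+S^4=8I=2T^4$, and multiplies it by scalar matrices $uI$. That check is immediate, and the same triple handles every even exponent (the corollary that follows). However, all the resulting solutions have $X^4=Y^4=Z^4=4u^4I$, so they satisfy the equation only as a constant progression of fourth powers.

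Your construction costs the expansion of $(U\pm V)^4$, but it gives a three-parameter family whose fourth powers genuinely differ. For your $U$ and $V_m$ one checks $UV_m=(k+nm)V_m$ and $V_mU=(k-nm)V_m$. Hence every word containing two or more factors $V_m$ is a multiple of $V_m^2=0$. Consequently $X^4-Z^4=Z^4-Y^4=4k(k^2+n^2m^2)V_m\neq 0$ for $k\neq 0$. More generally, $(U+V_m)^p+(U-V_m)^p=2U^p$ for every $p\ge 1$, odd exponents included.

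This observation also removes the sixteen-word bookkeeping you single out as the main obstacle. For $n=0$ it is just the binomial theorem for the commuting pair $kI$, $V_m$. Since you re-derive $UV+VU=2kV$ yourself, you use only the `if' half of Theorem~\ref{thm1}. You do not need the `only if' half, whose proof in the paper merely treats the choice $w=x$.

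One small correction: of the two cross terms in $(UV+VU)^2$, only $UV\cdot VU=UV^2U$ contains $V^2$. The other, $VU\cdot UV=VU^2V$, is the middle term of \eqref{c}. That is exactly why the three surviving terms of \eqref{b} add up to $(UV+VU)^2$.
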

\begin{proof}
    First we will introduce three matrix $R=\begin{pmatrix}
     1&1\\1&-1
 \end{pmatrix}$, $S=\begin{pmatrix}
     0&2\\1&0
 \end{pmatrix}$, $T=\begin{pmatrix}
     0&1\\2&0
 \end{pmatrix}$ which clearly satisfy equation $R^4+S^4=2T^4$. For $U=\begin{pmatrix}
     u&0\\0&u
 \end{pmatrix}\in M_2(\mathbb Z)$ we have $(UR)^4=U^4R^4,(US)^4=U^4S^4,(UT)^4=U^4T^4$. Now multiplying $U^4$ on both side of equation  $R^4+S^4=2T^4$ we get $U^4R^4+U^4S^4=2U^4T^4 \implies(UR)^4+(US)^4=2(UT)^4.$ Hence $X=\begin{pmatrix}
     u&u\\u&-u
 \end{pmatrix},Y=\begin{pmatrix}0&2u\\u&0
     \end{pmatrix},Z=\begin{pmatrix}
         0&u\\2u&0
     \end{pmatrix},u \in \mathbb Z$ satisfies the Diophantine equation.
     \end{proof}

\begin{corollary}
    The  Diophantine equation $X^n+Y^n=2Z^n$ has infinitely many solutions in $M_2(\mathbb Z)$ for each even n $\in \mathbb N.$     
\end{corollary}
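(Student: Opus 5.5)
We reuse the matrices $R$, $S$, $T$ from the proof of the preceding theorem. Their squares are scalar matrices:
\[
R^2=\begin{pmatrix}2&0\\0&2\end{pmatrix}=2I,\qquad S^2=\begin{pmatrix}2&0\\0&2\end{pmatrix}=2I,\qquad T^2=\begin{pmatrix}2&0\\0&2\end{pmatrix}=2I .
\]
For an even exponent $n=2m$ this gives
\[
R^n=(R^2)^m=2^mI,\qquad S^n=2^mI,\qquad T^n=2^mI .
\]
Hence
\[
R^n+S^n=2^{m+1}I=2\cdot 2^mI=2T^n,
\]
so $(R,S,T)$ solves $X^n+Y^n=2Z^n$ for every even $n$. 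This generalizes the identity $R^4+S^4=2T^4$ used for $n=4$.

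To get infinitely many solutions, we scale as before. Take $U=uI$ with $u\in\mathbb Z$. Since $U$ is scalar, it commutes with every matrix, so
\[
(UR)^n=u^nR^n,\qquad (US)^n=u^nS^n,\qquad (UT)^n=u^nT^n .
\]
Multiplying $R^n+S^n=2T^n$ by $u^n$ then gives $(UR)^n+(US)^n=2(UT)^n$. So for every $u\in\mathbb Z$,
\[
X=\begin{pmatrix}u&u\\u&-u\end{pmatrix},\quad Y=\begin{pmatrix}0&2u\\u&0\end{pmatrix},\quad Z=\begin{pmatrix}0&u\\2u&0\end{pmatrix}
\]
is a solution in $M_2(\mathbb Z)$.

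Distinct values of $u$ give distinct triples, so there are infinitely many solutions. For $u\neq0$ these solutions are nontrivial, since $X\neq \pm Y$.

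There is no real obstacle. The only step that needs checking is that each of $R^2$, $S^2$, $T^2$ is the scalar matrix $2I$; the rest is a formal consequence of scalar matrices commuting with everything. The argument uses $n$ even in an essential way, since $R^n$ reduces to a scalar matrix only for even powers.
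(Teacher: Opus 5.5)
Your proof is correct and uses the same construction as the paper's preceding $n=4$ theorem: the triple $R,S,T$ rescaled by the scalar matrix $uI$. The paper states this corollary without proof, and your observation that $R^2=S^2=T^2=2I$, which gives $R^n+S^n=2^{m+1}I=2T^n$ for $n=2m$, is exactly the step needed to pass from exponent $4$ to every even exponent.
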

\section{ matrix solution of order $3 \times 3$ }
This section contains integer matrix solution of order 3 to the Diophantine equation $X^4+Y^4=2Z^4.$
\begin{theorem}
    If $V=\begin{pmatrix}
        0&1&0\\0&0&1\\0&0&0
    \end{pmatrix}$ then $X^4+Y^4=2Z^4$ has solutions if and only if U is of the form $\begin{pmatrix}
        0&b&c\\0&0&f\\0&0&0
    \end{pmatrix} \in M_3(\mathbb Z) or\begin{pmatrix}
        0&k&s\\t&0&k\\0&-t&0
    \end{pmatrix} \in M_3(\mathbb Z).$
\end{theorem}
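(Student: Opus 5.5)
The plan is to work directly with the reduced equation \ref{b}, since $X=U+V$, $Y=U-V$, $Z=U$ satisfy $X^4+Y^4=2Z^4$ exactly when \ref{b} holds. The first step is to simplify \ref{b} for the given $V$. This $V$ is the $3\times 3$ nilpotent shift, so $V^2=E_{13}$ (the matrix unit with a single $1$ in position $(1,3)$) and $V^3=V^4=0$. Hence \ref{b} becomes the sum of the six words of degree two in $U$ and degree two in $V$:
\[
U^2V^2+UVUV+UV^2U+VU^2V+VUVU+V^2U^2=0 .
\]
For $U=(u_{ij})$ we record $UV$ and $VU$ explicitly. Right multiplication by $V$ shifts columns to the right, and left multiplication by $V$ shifts rows up. Also $UE_{13}$ is the first column of $U$ placed in column $3$, and $E_{13}U$ is the third row of $U$ placed in row $1$. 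With these rules every term is a quick bookkeeping exercise rather than a full matrix product.

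For sufficiency I treat the two families separately.
\begin{itemize}
\item \textbf{First family.} Here $U$ is strictly upper triangular, like $V$. Any product of three strictly upper triangular $3\times 3$ matrices vanishes, so every degree-four word in $U,V$ is zero and \ref{b} holds trivially.
\item \textbf{Second family.} Take $U=\begin{pmatrix}0&k&s\\t&0&k\\0&-t&0\end{pmatrix}$. I compute $U^2$, $UV$ and $VU$ explicitly, then assemble the six words using the shift rules above. The terms $U^2E_{13}$, $UE_{13}U$ and $E_{13}U^2$ are supported on very few entries. I expect the remaining terms $UVUV$, $VU^2V$ and $VUVU$ to combine with them so that every entry cancels, with the opposite signs of $t$ and $-t$ in the first column and the repeated $k$ on the superdiagonal supplying the needed cancellations.
\end{itemize}
This is a finite check.

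For necessity I write $U$ with nine unknown entries and expand the left side of \ref{b} into nine polynomial equations, each quadratic in the $u_{ij}$. I would first read off the entries in the lower-left corner, which receive contributions from few words. These should force vanishing of products such as $u_{31}^2$, hence $u_{31}=0$, and then further relations among $u_{21},u_{32},u_{11},u_{22},u_{33}$. The argument then splits into two cases.
\begin{itemize}
\item \textbf{Case $u_{21}=u_{32}=0$.} I expect the diagonal entries to be forced to zero, which yields the first family.
\item \textbf{Case where $u_{21}$ or $u_{32}$ is nonzero.} The equations should force the relations $u_{32}=-u_{21}$, $u_{12}=u_{23}$ and a zero diagonal, which yields the second family.
\end{itemize}

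The main obstacle is this converse. The system is a genuinely nonlinear quadratic system in nine unknowns, and it is not clear that its full solution set is exhausted by the two displayed families. For example, scalar multiples of the identity or other diagonal perturbations might survive. If a direct case analysis produces extra solutions, I would restrict the ``only if'' direction to the natural ansatz the statement implicitly uses, namely $U$ with zero diagonal. Under that ansatz I would redo the elimination, which then reduces to a small number of monomial conditions.
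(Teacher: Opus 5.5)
\textbf{Sufficiency is fine; necessity has a gap that cannot be closed.} Your plan for the ``if'' direction works: the first family vanishes because any product of three strictly upper triangular $3\times3$ matrices is zero, and for the second family the six words do cancel entrywise. The trouble is the ``only if'' direction, which is false as stated.

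Carry out exactly the elimination you describe. The $(3,1)$ entry gives $u_{31}^2=0$. The $(2,2)$ entry then gives $(u_{21}+u_{32})^2=0$. The $(1,2)$ and $(2,3)$ entries both collapse to $u_{21}(u_{11}-u_{33})=0$. The $(1,3)$ entry becomes
\[
u_{11}^2+u_{22}^2+u_{33}^2+u_{11}u_{22}+u_{11}u_{33}+u_{22}u_{33}+u_{21}(u_{12}-u_{23})=0 .
\]
The remaining entries then vanish identically, and $u_{13}$ is free. When $u_{21}=0$, the quadratic form equals $\tfrac12[(u_{11}+u_{22})^2+(u_{11}+u_{33})^2+(u_{22}+u_{33})^2]$. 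So the diagonal is forced to vanish and you get the first family, as you predicted. When $u_{21}\neq0$, however, you only get $u_{11}=u_{33}$ and $u_{21}(u_{23}-u_{12})=2u_{11}^2+(u_{11}+u_{22})^2$. Nothing forces a zero diagonal. Concretely, take
\[
U=\begin{pmatrix}1&0&0\\1&0&3\\0&-1&1\end{pmatrix},\qquad X=U+V=\begin{pmatrix}1&1&0\\1&0&4\\0&-1&1\end{pmatrix},\qquad Y=U-V=\begin{pmatrix}1&-1&0\\1&0&2\\0&-1&1\end{pmatrix}.
\]
These give $X^4+Y^4=2U^4=\begin{pmatrix}2&0&0\\-10&12&-30\\0&10&2\end{pmatrix}$, yet $U$ lies in neither family. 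So your step ``the equations should force \dots\ a zero diagonal'' fails, and no case analysis can rescue it.

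\textbf{The paper's proof has the same hole.} In its Case~2 it reaches $2a^2+(e+a)^2+h(f-b)=0$ and then simply sets $a=e=0$. That exhibits the second family as a subfamily of solutions but proves nothing about necessity. Your fallback of imposing a zero diagonal is exactly the hidden hypothesis under which the statement becomes true: the last equation then reads $u_{21}(u_{12}-u_{23})=0$, so $u_{12}=u_{23}$. But that establishes a weaker, corrected theorem, not the one stated.

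Your guess about scalar matrices is also off. For $U=\lambda I$ the left side of \ref{b} equals $6\lambda^2 V^2\neq0$. The extra solutions come instead from nonzero diagonals coupled with $u_{21}\neq0$.
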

\begin{proof}
As $V=\begin{pmatrix}
    0&1&0\\0&0&1\\0&0&0
\end{pmatrix}\implies V^2=\begin{pmatrix}
    0&0&1\\0&0&0\\0&0&0
\end{pmatrix}\implies V^4=0.$ Now equation\ref{b} reduce to $U^2V^2+UVUV+UV^2U+VU^2V+VUVU+V^2U^2=0.$ Let $U=\begin{pmatrix}
    a&b&c\\d&e&f\\g&h&i
\end{pmatrix}.$ solving $U^2V^2+UVUV+UV^2U+VU^2V+VUVU+V^2U^2=0$ we get 
$2ag+dh+ig+d^2+eg=0,2ad+2ed+2eh+ah+2bg+fg+ih=0,a^2+e^2+i^2+2bd+bh+2fh+2cg+ae+ai+df+ei=0,2dg+hg=0,d^2+h^2+2dh+2eg+ag+ig=0,ad+bg+2gf+2eh+2ed+2ih+id=0,g^2=0,2gh+dg=0,h^2+ag+dh+2ig+eg=0.$ from $g^2=0,$ we have $g=0$ now using $g=0$ we also get $(d+h)^2=0\implies d=-h$.Again using $d=-h$ we get $(a-i)d=0$ \\
Case-1($d=0$)\\
 As $a^2+e^2+i^2+2bd+bh+2fh+2cg+ae+ia+df+ei=0\implies a=e=i=0.$ Hence $U=\begin{pmatrix}
    0&b&c\\0&0&f\\0&0&0\\
\end{pmatrix}\in M_3(\mathbb Z).
$Converse can be proved easily.\\
Case-2($d\neq0$)\\
From above equation we have $3a^2+e^2+2ae+h(f-b)=0\implies2a^2+(e+a)^2+h(f-b)=0$ that means $h$ must divides $2a^2+(e+a)^2.$ Setting $a=e=0 $ we get $f=b$ and c is arbitrary.
So we have $U=\begin{pmatrix}
    o&k&s\\t&0&k\\0&-t&0\\
\end{pmatrix}$ where $k,s,t\in \mathbb Z.$ Converse can be done easily.

\end{proof}

\begin{corollary}
     If $V=\begin{pmatrix}
        0&j&0\\0&0&j\\0&0&0
    \end{pmatrix}  \in M_3(\mathbb Z),j\neq 0$ then $X^4+Y^4=2Z^4$ has solutions if and only if U is of the form $\begin{pmatrix}
        0&b&c\\0&0&f\\0&0&0
    \end{pmatrix} \in M_3(\mathbb Z) or\begin{pmatrix}
        0&k&s\\t&0&k\\0&-t&0
    \end{pmatrix} \in M_3(\mathbb Z).$
\end{corollary}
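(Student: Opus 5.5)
The plan is to reduce the corollary to the preceding theorem by a scaling argument. Put $V_j=\begin{pmatrix}0&j&0\\0&0&j\\0&0&0\end{pmatrix}$, so that $V_j=jV_1$, where $V_1$ is the matrix $V$ of the preceding theorem.

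Every term in equation \ref{b} except $V^4$ has degree exactly two in $V$. Also $V_j^4=j^4V_1^4=0$, just as for $V_1$. So, as in the theorem, equation \ref{b} with $V=V_j$ reduces to
\[
\Phi(U,V):=U^2V^2+UVUV+UV^2U+VU^2V+VUVU+V^2U^2=0 .
\]
This expression is homogeneous of degree two in $V$, hence
\[
\Phi(U,V_j)=j^2\,\Phi(U,V_1).
\]

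Since $j\neq 0$, we have $j^2\neq 0$. Matrices over $\mathbb Z$ have no nonzero scalar torsion, so $j^2\Phi(U,V_1)=0$ holds if and only if $\Phi(U,V_1)=0$. Therefore the solution set in $U$ for $V_j$ is exactly the solution set for $V_1$.

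By the preceding theorem, that set consists of the matrices $\begin{pmatrix}0&b&c\\0&0&f\\0&0&0\end{pmatrix}$ and $\begin{pmatrix}0&k&s\\t&0&k\\0&-t&0\end{pmatrix}$. Setting $X=U+V_j$, $Y=U-V_j$ and $Z=U$ then gives solutions of $X^4+Y^4=2Z^4$, as in the derivation of \ref{b}.

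The only step needing care is the homogeneity claim, so I will check it term by term. Each of the six monomials contains exactly two factors of $V$, and these scalars commute with $U$. This is routine.

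The real obstacle lies elsewhere: the corollary inherits whatever exhaustiveness the theorem's "only if" direction actually has. In Case 2 of the theorem, only the specialization $a=e=0$ was recorded. So I will rely on the theorem as stated rather than try to re-derive the full classification.
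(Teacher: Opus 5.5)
Your homogeneity reduction is correct, and it is the one-line argument the paper leaves implicit; the paper states this corollary without proof. Since $V_j^4=0$ and $\Phi(U,jV_1)=j^2\Phi(U,V_1)$, the admissible $U$ do not depend on $j\neq0$, and the ``if'' direction follows.

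The trouble is the step you chose to defer. The ``only if'' half of the preceding theorem is false, so by your own reduction the ``only if'' half of this corollary is false for every $j\neq0$. Relying on the theorem ``as stated'' therefore gives no proof, and none exists. Write $U$ with entries $a,\dots,i$ as in the paper. Then $g^2=0$ gives $g=0$, and $(d+h)^2=0$ gives $h=-d$. The nine entry equations then collapse to
\[
d(a-i)=0,\qquad a^2+e^2+i^2+ae+ai+ei=d(f-b).
\]
If $d=0$, the left side of the second equation is $\tfrac12\bigl[(a+e)^2+(a+i)^2+(e+i)^2\bigr]$. This forces $a=e=i=0$, which is the strictly upper triangular family. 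If $d\neq0$, you only get $i=a$ and $d(f-b)=2a^2+(a+e)^2$. The paper's second family is merely the sub-case $a=e=0$.

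For example, $U=\begin{pmatrix}0&0&0\\1&1&1\\0&-1&0\end{pmatrix}$ (here $d=e=f=1$, $b=0$) meets these conditions. With $V=V_1$, $X=U+V$, $Y=U-V$, $Z=U$, a direct computation gives $X^4+Y^4=2Z^4=\begin{pmatrix}0&0&0\\-2&-2&-2\\0&2&0\end{pmatrix}$. By your scaling, the same $U$ works with every $V_j$. Its $(2,1)$ entry is nonzero, so it is not strictly upper triangular. Its $(2,2)$ entry is nonzero, so it is not of the form $\begin{pmatrix}0&k&s\\t&0&k\\0&-t&0\end{pmatrix}$.

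What your argument legitimately yields is the ``if'' direction. Together with the computation above, it also gives the corrected statement: $U$ works if and only if $U=\begin{pmatrix}a&b&c\\d&e&f\\0&-d&i\end{pmatrix}$ with $d(a-i)=0$ and $a^2+e^2+i^2+ae+ai+ei=d(f-b)$.
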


\begin{theorem}
    The  Diophantine equation $X^4+Y^4=2Z^4$ has infinitely many solutions in $M_3(\mathbb Z)$. 
\end{theorem}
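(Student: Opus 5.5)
The plan is to produce an explicit infinite family directly, in the same spirit as the proof of the analogous result in $M_2(\mathbb Z)$, and to double-check it against the preceding theorem on $3\times 3$ solutions. The quickest route is to read off solutions from that theorem. Take $V=\begin{pmatrix}0&1&0\\0&0&1\\0&0&0\end{pmatrix}$ and $U=\begin{pmatrix}0&b&c\\0&0&f\\0&0&0\end{pmatrix}$ with $b,c,f\in\mathbb Z$ arbitrary. Then put $X=U+V$, $Y=U-V$, $Z=U$.

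The verification is independent of the case analysis in that proof. The strictly upper triangular $3\times 3$ matrices are closed under addition. Any product of three of them is $0$, since each factor shifts the standard flag down one step. Hence $X^4=Y^4=Z^4=0$, and $X^4+Y^4=2Z^4$ holds. Distinct triples $(b,c,f)$ give distinct $Z=U$, so this yields infinitely many solutions.

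Because these solutions are all nilpotent and might be regarded as degenerate, I would also give a non-nilpotent family, modelled on the theorem for $M_2(\mathbb Z)$. Let $R,S,T$ be the $2\times2$ matrices from that proof, which satisfy $R^4+S^4=2T^4$. Form the block-diagonal matrices
\[
X=\begin{pmatrix}uR&0\\0&u\end{pmatrix},\quad Y=\begin{pmatrix}uS&0\\0&u\end{pmatrix},\quad Z=\begin{pmatrix}uT&0\\0&u\end{pmatrix},\qquad u\in\mathbb Z .
\]
Powers of block-diagonal matrices are computed blockwise. The $2\times2$ block gives $u^4(R^4+S^4)=2u^4T^4$, and the $1\times1$ block gives $u^4+u^4=2u^4$, so the equation holds for every $u$. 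Different values of $u$ give different matrices, so this family is infinite as well.

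I expect no real obstacle here, since the argument is a direct verification. The only points needing care are that multiplication respects the block structure, which is immediate, and that the families are genuinely infinite rather than repeating matrices. The latter is clear because $Z$ determines $u$, or determines $(b,c,f)$ in the first family.
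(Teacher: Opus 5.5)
Your proof is correct, and its main step differs from the paper's. The paper takes the three transposition matrices $P,Q,R\in M_3(\mathbb Z)$, which swap coordinates $1,2$, then $1,3$, then $2,3$. Each is an involution, so $P^4=Q^4=R^4=I_3$ and hence $P^4+Q^4=2I_3=2R^4$. The paper then scales by $tI_3$ to get the family $(tP,tQ,tR)$, $t\in\mathbb Z$.

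Your first family works by nilpotency instead: any product of three strictly upper triangular $3\times3$ matrices vanishes, so $X^4=Y^4=Z^4=0$. This gives a three-parameter family. You were right to verify it directly rather than lean on the case analysis of the preceding theorem. In fact the substitution with $V$ plays no role here, since any three strictly upper triangular integer matrices satisfy the equation.

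Your second family, $\mathrm{diag}(uR,u)$ and so on, uses the paper's mechanism: a fixed seed triple whose fourth powers coincide, scaled by an integer. Only the seed differs: you embed the $2\times2$ solution, where the paper uses permutation matrices. Like the paper's family, it gives nonsingular matrices for $u\neq0$, which the nilpotent family cannot.

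All three families have $X^4=Y^4=Z^4$: it equals $0$, $u^4\,\mathrm{diag}(4,4,1)$ and $t^4I_3$ respectively. Each is therefore the matrix counterpart of the trivial scalar solutions, in which the three fourth powers agree. That suffices for the statement as posed, but none of these constructions produces solutions with $X^4\neq Z^4$.
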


\begin{proof}
  Let us consider three matrix $P=\begin{pmatrix}
      0&1&0\\1&0&0\\0&0&1
  \end{pmatrix},Q=\begin{pmatrix}
      0&0&1\\0&1&0\\1&0&0
  \end{pmatrix},R=\begin{pmatrix}
      1&0&0\\0&0&1\\0&1&0
  \end{pmatrix}$ which  clearly satisfy $P^4+Q^4=2R^4.$ Now for matrix $L=\begin{pmatrix}
      t&0&0\\0&t&0\\0&0&t
  \end{pmatrix}\in M_3(\mathbb Z), $  we have $L^4P^4=(LP)^4,L^4Q^4=(LQ)^4,L^4R^4=(LR)^4.$ 
  On multiplying $L^4$ to equation $P^4+Q^4=2R^4$ ,we get $(LP)^4+(LQ)^4=2(LR)^4$.Hence $X=\begin{pmatrix}
      0&t&0\\t&0&0\\0&0&t
  \end{pmatrix},Y=\begin{pmatrix}
      0&0&t\\0&t&0\\t&0&0
  \end{pmatrix},Z=\begin{pmatrix}
      t&0&0\\0&0&t\\0&t&0
  \end{pmatrix}$ satisfy $X^4+Y^4=2Z^4,$ for $t\in \mathbb Z.$
\end{proof}

\section{Matrix solution of order $4 \times 4$}
In this section, we have found integer matrix solutions  as well as solutions  
over extension fields to the same Diophantine equation.
\begin{theorem}
 Let $V=\begin{pmatrix}
     0&0&k&0\\0&0&0&k\\0&0&0&0\\0&0&0&0
 \end{pmatrix}$ ,then equation \ref{b} has solution if $U$ is of the form $\begin{pmatrix}
     U_{11}&U_{12}\\0_2&U_{22}
 \end{pmatrix}$ where$U_{11},U_{12},U_{22},0_2 $ are block matrix of order 2.  
\end{theorem}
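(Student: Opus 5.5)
The plan is to reuse the reduction already used in the proof of Theorem~\ref{thm1}, this time in block form. First I will observe that, writing $I_2$ for the $2\times 2$ identity and $0_2$ for the $2\times 2$ zero block, the given matrix is the block matrix $V=\begin{pmatrix} 0_2 & kI_2\\ 0_2 & 0_2\end{pmatrix}$. A one-line block multiplication then shows $V^2=0$, and hence $V^4=0$. With $V^2=0$, the terms $U^2V^2$, $UV^2U$, $V^2U^2$ and $V^4$ in equation \ref{b} all vanish. Equation \ref{b} therefore reduces to
\begin{equation*}
UVUV+VU^2V+VUVU=0 .
\end{equation*}
Expanding $(UV+VU)^2=UVUV+UV^2U+VU^2V+VUVU$ and again using $V^2=0$, this is exactly the condition $(UV+VU)^2=0$, just as in the $2\times 2$ case.

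Next I will take $U=\begin{pmatrix} U_{11}&U_{12}\\ 0_2&U_{22}\end{pmatrix}$ and compute both products blockwise. Since $kI_2$ is central, we get
\begin{equation*}
UV=\begin{pmatrix} 0_2 & kU_{11}\\ 0_2&0_2\end{pmatrix},\qquad VU=\begin{pmatrix} 0_2 & kU_{22}\\ 0_2&0_2\end{pmatrix},
\end{equation*}
and therefore
\begin{equation*}
UV+VU=\begin{pmatrix} 0_2 & k(U_{11}+U_{22})\\ 0_2&0_2\end{pmatrix}.
\end{equation*}
Any block matrix of the form $\begin{pmatrix} 0_2&W\\ 0_2&0_2\end{pmatrix}$ squares to zero. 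Hence $(UV+VU)^2=0$, and $U$ satisfies equation \ref{b} for arbitrary $2\times 2$ blocks $U_{11},U_{12},U_{22}$. Finally, setting $X=U+V$, $Y=U-V$, $Z=U$ gives solutions of $X^4+Y^4=2Z^4$, exactly as in Section~\ref{section3}.

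There is no real obstacle, since the statement only claims sufficiency. The one point to get right is the sign and ordering of the terms when identifying the reduced form of \ref{b} with $(UV+VU)^2$. Noncommutativity matters there, so I would write out all four terms of the square explicitly rather than rely on the analogy with Theorem~\ref{thm1}.
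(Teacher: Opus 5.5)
Your proof is correct and takes essentially the paper's route: write $V=\begin{pmatrix}0_2&kI_2\\0_2&0_2\end{pmatrix}$, use $V^2=0$ to reduce equation \ref{b} to $UVUV+VU^2V+VUVU=0$, and check this by block multiplication. The only cosmetic difference is that the paper expands the expression for a general $U$ and then sets $U_{21}=0$; up to the factor $k^2$ it omits, that expansion is $\begin{pmatrix}U_{21}^2&\ast\\0&U_{21}^2\end{pmatrix}$. You instead rewrite it as $(UV+VU)^2$ and compute only for block upper triangular $U$, which is slightly shorter.
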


\begin{proof}
  We can write matrix $V=\begin{pmatrix}
    0_2&kI_2\\0_2&0_2  
  \end{pmatrix}$  as here $V^2=0 $ hence equation\ref{b} reduces to $UVUV+VU^2V+VUVU=0.$ Let $U=\begin{pmatrix}
     U_{11}&U_{12}\\U_{21}&U_{22}
 \end{pmatrix},$ now $UVUV+VU^2V+VUVU=\begin{pmatrix}
     U_{21}^2&U_{11}U_{21}+U_{21}U_{22}+U_{21}U_{11}+U_{22}U_{21}\\
     0&U_{21}^2
 \end{pmatrix}$ The above expression became zero if $U_{21}=0.$ Hence if U is of the form $\begin{pmatrix}
     U_{11}&U_{12}\\0_2&U_{22}
 \end{pmatrix}.$  
\end{proof}

\begin{theorem}
 let $V=\begin{pmatrix}
     1&1&1&1\\1&1&1&1\\1&1&1&1\\1&1&1&1
 \end{pmatrix},$ then equation \ref{b} has solution iff $U$ has row  and column sum zero.   
\end{theorem}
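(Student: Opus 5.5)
The plan is to exploit the rank-one structure of $V$. Write $e=(1,1,1,1)^T$, so that $V=ee^T$, $V^2=4V$ and $V^4=64V$. For $U\in M_4(\mathbb Z)$ set $a=Ue$ (the vector of row sums), $b^T=e^TU$ (the vector of column sums) and $s=e^TUe$ (the total sum of entries). Then $UV=ae^T$ and $VU=eb^T$, and every term of equation \ref{b} collapses to an outer product:
\begin{align*}
U^2V^2&=4\,(Ua)e^T, & UVUV&=s\,ae^T, & UV^2U&=4\,ab^T,\\
VU^2V&=(b^Ta)\,ee^T, & VUVU&=s\,eb^T, & V^2U^2&=4\,e(b^TU).
\end{align*}

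\textbf{Sufficiency.} If $U$ has zero row and column sums, then $a=0$ and $b=0$, so $UV=VU=0$. Each of the six mixed terms contains $UV$ or $VU$ as a factor and therefore vanishes. At this point there is a genuine problem with the statement as printed: the pure term $V^4=64V$ in \ref{b} is not zero. Hence \ref{b} itself cannot hold for any such $U$. The statement is only correct if ``equation \ref{b} has solution'' is read as the vanishing of the mixed part
\[
U^2V^2+UVUV+UV^2U+VU^2V+VUVU+V^2U^2 .
\]
This is the reduced equation used in the previous results, where $V^2=0$ made the $V^4$ term disappear automatically. I would state this interpretation explicitly and prove the theorem for the mixed part.

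\textbf{Necessity.} Suppose the mixed sum $M$ vanishes. I would extract scalar and vector identities by sandwiching $M$ between $e^T$ and $e$.
\begin{itemize}
\item Computing $e^TMe$ gives a combination of $b^Ta$, $s^2$ and $e^TUa=b^Ta$ with positive coefficients, namely $32\,b^Ta+8s^2+16\,b^Ta+16\,b^Ta+\dots$.
\item Computing $Me$ and $e^TM$ gives vector equations involving $Ua$, $a$, $e$ and $b$.
\end{itemize}
The aim is to force $a=0$ and $b=0$. The first target is $s=0$, followed by $b^Ta=0$. Then the vector equations should reduce to relations such as $16\,Ua+4s\,a+\dots=0$, and from these I would try to conclude $a=0$, and symmetrically $b=0$.

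\textbf{Expected obstacle.} The main obstacle is this last step. The scalar quantity $b^Ta$ has no sign in general, so positivity arguments do not apply directly. Moreover, $U$ could in principle have $a$ as a nonzero eigenvector-like direction that cancels the other terms. If the scalar identities do not close the argument, I would fall back on the explicit entrywise system, as in the $3\times 3$ case. I expect that necessity may actually fail for some non-obvious $U$. In that case the honest conclusion is that the stated condition is sufficient for the mixed part to vanish, and I would search for a counterexample to necessity by trying $U$ with $a\neq 0$ but $s=0$.
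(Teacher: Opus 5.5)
Your diagnosis of the printed statement is correct, and it identifies exactly where the paper's own proof goes wrong. Since $V^2=4V$, we have $V^4=64V\neq 0$. The paper, however, rewrites \ref{b} as a sum of the six mixed terms only: it silently drops $V^4$ and mistypes $V^2U^2$ as $VVU$. It then argues, as you do, that every term has a factor $UV$ or $VU$. So your sufficiency argument is the paper's argument, and it proves only that the mixed part vanishes. For every $U$ with zero row and column sums, the left side of \ref{b} equals $64V$, so the ``if'' direction of the theorem as printed is false.

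The ``only if'' direction, which the paper dismisses with ``Converse can be proved easily'', is false even for the mixed part, so your sandwiching plan cannot be completed. The scalar identity is in fact $e^TMe=48\,b^Ta+12s^2$, and both quantities vanish in the example below, so it carries no information. Your fallback is the right move, and your own outer-product formulas locate the counterexamples. If $b=0$, then $s=b^Te=0$, $b^Ta=0$ and $b^TU=0$, so the mixed sum collapses to $4(Ua)e^T$. This vanishes if and only if $U^2e=0$. Concretely, take
\[
U=\begin{pmatrix}1&1&0&0\\-1&-1&0&0\\0&0&0&0\\0&0&0&0\end{pmatrix}.
\]
It has zero column sums, so $VU=0$, and $U^2=0$. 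Hence each of the six mixed terms contains $VU$ or $U^2$ and vanishes. Yet its row sums are $(2,-2,0,0)$. Its transpose $U^T$ gives the same failure with rows and columns exchanged. In summary, as printed the ``if'' direction fails, and under your reinterpretation the ``only if'' direction fails. All that survives is that zero row and column sums are sufficient, but not necessary, for the mixed part of \ref{b} to vanish.
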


\begin{proof}
  We have $V=\begin{pmatrix}
     1&1&1&1\\1&1&1&1\\1&1&1&1\\1&1&1&1
 \end{pmatrix}.$ For any matrix U,  $(UV)_{i,j}$=row sum of $ith$ row of $U$ and $(VU)_{i,j}$= column sum of $jth$ column of $U,$ for $i,j=1, \cdots, 4.$ We can rewrite the equation\ref{b} as $UUVV+UVUV+VUVU+UVVU+VUUV+VVU=0$ if we observe we see that each term contain either UV or VU ,if we find such U for which both UV and VU is zero, equation will automatically satified hence equation will hold for those matrix U whose row sum and column sum is zero. Converse can be proved easily. 
\end{proof}

\begin{example} Let
     $V=\begin{pmatrix}
     1&1&1&1\\1&1&1&1\\1&1&1&1\\1&1&1&1
 \end{pmatrix}$ , $U=\begin{pmatrix} 1 & 2 & -1 & -2 \\ -2 & 1 & 2 & -1 \\ -1 & -2 & 1 & 2 \\ 2 & -1 & -2 & 1  \end{pmatrix} \in M_4 (\mathbb Z)$ satisfies the equation \ref{b}.
\end{example}

\begin{theorem}
      The  Diophantine equation $X^4+Y^4=2Z^4$ has infinitely many solutions in $M_4(\mathbb Z)$.
\end{theorem}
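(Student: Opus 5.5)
The plan is to follow the pattern of the $2\times 2$ and $3\times 3$ cases: first exhibit one concrete nontrivial triple $(X_0,Y_0,Z_0)$ in $M_4(\mathbb Z)$, then multiply it by scalar matrices to obtain an infinite family. The simplest way to get $X_0$, $Y_0$, $Z_0$ is to use permutation matrices, or more generally matrices with $X_0^4=Y_0^4=Z_0^4=I_4$. Then $X_0^4+Y_0^4=2I_4=2Z_0^4$ holds automatically.

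For the concrete choice, I would take $4\times 4$ permutation matrices corresponding to involutions or $4$-cycles, since every such permutation has order dividing $4$. Three natural candidates are:
\begin{itemize}
\item $X_0$, the matrix of the transposition $(1\,2)$;
\item $Y_0$, the matrix of the $4$-cycle $(1\,2\,3\,4)$, namely the circulant with ones on the superdiagonal and a one in position $(4,1)$;
\item $Z_0$, the matrix of $(1\,3)(2\,4)$.
\end{itemize}
In each case the fourth power of the permutation is the identity, so $X_0^4=Y_0^4=Z_0^4=I_4$. The verification is then only the observation that the order of each permutation divides $4$. Alternatively, one can use the block-diagonal construction $\mathrm{diag}(R,R)$, $\mathrm{diag}(S,S)$, $\mathrm{diag}(T,T)$ built from the $2\times 2$ matrices $R$, $S$, $T$ of the earlier theorem in $M_2(\mathbb Z)$. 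Since fourth powers of block-diagonal matrices are computed blockwise, these satisfy the equation because $R^4+S^4=2T^4$.

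Next, let $L=tI_4$ with $t\in\mathbb Z$. Since $L$ is scalar, it commutes with everything, so $(LX_0)^4=L^4X_0^4=t^4X_0^4$, and similarly for $Y_0$ and $Z_0$. Multiplying $X_0^4+Y_0^4=2Z_0^4$ by $t^4$ gives
\[
(tX_0)^4+(tY_0)^4=2(tZ_0)^4 .
\]
For distinct $t\neq 0$ these triples are distinct, so this yields infinitely many solutions.

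To make the family less degenerate, one could also conjugate by any $P\in GL_4(\mathbb Z)$. Then $(PX_0P^{-1})^4=PX_0^4P^{-1}$, so the identity is preserved, and $GL_4(\mathbb Z)$ is infinite. A third source of solutions is earlier results in this section: the matrices $U$ with zero row and column sums for $V=J_4$, together with $X=U+V$, $Y=U-V$, $Z=U$, each give a solution of \eqref{b} and hence of the original equation. Scaling such a $U$ by $t$ still gives zero row and column sums, so this also produces infinitely many solutions.

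There is essentially no real obstacle. The only point needing care is checking that the chosen $X_0$, $Y_0$, $Z_0$ really have fourth power $I_4$; the scalar-multiplication step is immediate. If one insists on pairwise noncommuting or non-scalar-multiple families, the conjugation argument by $GL_4(\mathbb Z)$ is the step to add.
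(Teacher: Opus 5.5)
Your main argument is correct and is essentially the paper's. The paper takes the permutation matrices of $(1\,2)(3\,4)$, $(1\,3)(2\,4)$, $(1\,4)(2\,3)$, whose fourth powers all equal $I_4$, and scales them by $L=tI_4$. Your choice of $(1\,2)$, $(1\,2\,3\,4)$, $(1\,3)(2\,4)$ works in the same way, as do the block-diagonal variant $\mathrm{diag}(R,R),\mathrm{diag}(S,S),\mathrm{diag}(T,T)$ and the $GL_4(\mathbb Z)$-conjugation remark.

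One side claim is false, however, and should be deleted: your ``third source'' does not give solutions. This is the construction with $V=J_4$ (the all-ones matrix) and $U$ having zero row and column sums. Since $UJ_4=J_4U=0$, every word containing both letters vanishes, so
$(U\pm J_4)^4=U^4+J_4^4=U^4+64J_4$. Hence $X^4+Y^4-2Z^4=128J_4\neq 0$. The six mixed terms of equation \ref{b} do vanish, but the term $V^4=64J_4$ does not; the result you are quoting overlooks it. Your main proof never used this remark, so it is unaffected.
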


\begin{proof}
 Let us consider three matrix $P=\begin{pmatrix}
      0&1&0&0\\1&0&0&0\\0&0&0&1\\0&0&1&0
  \end{pmatrix},Q=\begin{pmatrix}
      0&0&1&0\\0&0&0&1\\1&0&0&0\\0&1&0&0
  \end{pmatrix},R=\begin{pmatrix}
      0&0&0&1\\0&0&1&0\\0&1&0&0\\1&0&0&0
  \end{pmatrix}$ which  clearly satisfy $P^4+Q^4=2R^4.$ Now for matrix $L=\begin{pmatrix}
      t&0&0&0\\0&t&0&0\\0&0&t&0\\0&0&0&t
  \end{pmatrix}\in M_4(\mathbb Z), $  we have $L^4P^4=(LP)^4,L^4Q^4=(LQ)^4,L^4R^4=(LR)^4.$ 
  On multiplying $L^4$ to equation $P^4+Q^4=2R^4$ ,we get $(LP)^4+(LQ)^4=2(LR)^4$.Hence $X=\begin{pmatrix}
      0&t&0&0\\t&0&0&0\\0&0&0&t\\0&0&t&0
  \end{pmatrix},Y=\begin{pmatrix}
      0&0&t&0\\0&0&0&t\\t&0&0&0\\0&t&0&o
  \end{pmatrix},Z=\begin{pmatrix}
      0&0&0&t\\0&0&t&0\\0&t&0&0\\t&0&0&0
  \end{pmatrix}$ satisfy $X^4+Y^4=2Z^4,$ for $t\in \mathbb Z.$    
\end{proof}

\begin{theorem}
    The equatiom $X^4+Y^4=2Z^4$ has infinitely many  solutions in $M_4(\mathbb Z).$
\end{theorem}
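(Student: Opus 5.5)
This statement repeats the previous theorem, so I would give a second, independent construction rather than reuse the permutation-matrix argument. The natural route is the reduction already in place: set $X=U+V$, $Y=U-V$, $Z=U$. By the theorem with $V$ the all-ones $4\times 4$ matrix, equation \ref{b} holds for every $U$ whose row sums and column sums all vanish. That family is a lattice of rank $9$ in $M_4(\mathbb Z)$, so it is infinite. The plan is to make this rigorous and to check that the resulting triples $(X,Y,Z)$ are pairwise distinct and nontrivial.

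\textbf{Step 1: the identity.} Let $J$ be the all-ones matrix and suppose $UJ=JU=0$. I would check directly that $(U\pm J)^4=U^4+J^4$. Expand $(U+J)^4$ as a sum of $16$ words in $U$ and $J$. Every mixed word contains a factor $UJ$ or $JU$ at some adjacent position, so it vanishes. This gives
\[
(U+J)^4+(U-J)^4=2U^4+2J^4 .
\]
Equation \ref{b} is only the statement that the mixed terms cancel, and here they vanish individually. Thus $X^4+Y^4=2Z^4$ would hold exactly when $J^4=0$.

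\textbf{Step 2: the obstacle.} But $J^4=64J\neq 0$, since $J^2=4J$. So the $V^4$ term that equation \ref{b} carries is not zero. This is the main obstacle: the earlier theorem's argument tacitly drops the $V^4$ term. I would repair it by replacing $J$ with a nilpotent $V$ that still annihilates $U$ on both sides. For example, take $V=\begin{pmatrix}0_2&kI_2\\0_2&0_2\end{pmatrix}$ from the block theorem above, which satisfies $V^2=0$. Then Step 1's argument still shows that all mixed words vanish as soon as $UV=VU=0$.

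\textbf{Step 3: solving $UV=VU=0$.} Write $U$ in $2\times 2$ blocks. The condition $UV=0$ forces the first block column of $U$ to vanish, and $VU=0$ forces the second block row to vanish. Hence
\[
U=\begin{pmatrix}0_2&W\\0_2&0_2\end{pmatrix},\qquad W\in M_2(\mathbb Z)\ \text{arbitrary}.
\]
With this $U$, both $U\pm V$ and $U$ have square zero, so the identity degenerates to $0+0=0$. To avoid such degenerate solutions, I would instead pick a non-nilpotent $U$ in the block-upper-triangular family of the earlier theorem, where $U_{21}=0$ makes equation \ref{b} hold. Taking $U_{11}=U_{22}=mI_2$ and $U_{12}$ arbitrary, with $m\in\mathbb Z$ ranging over infinitely many values, gives infinitely many distinct triples $X=U+V$, $Y=U-V$, $Z=U$. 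Since $V^2=0$, the $V^4$ term is genuinely absent here.

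\textbf{Step 4: verification.} I would confirm the final triple by noting that $U$ is block upper triangular. Then $X$, $Y$, $Z$ are all block upper triangular with diagonal blocks $mI_2$, so the diagonal blocks of $X^4+Y^4-2Z^4$ are $m^4I_2+m^4I_2-2m^4I_2=0$. The off-diagonal block is checked by the block computation in the earlier theorem's proof. This last computation is where I expect the care to be needed.
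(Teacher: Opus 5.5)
Your final construction is correct, but it takes a genuinely different route from the paper's.

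\textbf{The paper's route.} The paper does not use the substitution $X=U+V$, $Y=U-V$, $Z=U$ for this theorem. It takes $X=C_{\zeta,\eta}$, the companion matrix of $x^4-(2\zeta^4-\eta^4)$, together with $Y=D_\eta$ and $Z=E_\zeta$. These last two are $\eta$ and $\zeta$ times permutation matrices of order dividing $4$. All three fourth powers are then scalar, namely $(2\zeta^4-\eta^4)I_4$, $\eta^4I_4$ and $\zeta^4I_4$, so the matrix equation collapses to a scalar identity. Varying $\zeta,\eta$ gives infinitely many triples.

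\textbf{Your route.} You instead make the fourth-power map affine. With $V=\begin{pmatrix}0_2&kI_2\\0_2&0_2\end{pmatrix}$ and $U=\begin{pmatrix}mI_2&W\\0_2&mI_2\end{pmatrix}$, each of $X,Y,Z$ has the form $mI_4+N$ with $N^2=0$. Hence $X^4=m^4I_4+4m^3N_X$, and likewise for $Y$ and $Z$. Since $N_X+N_Y=2N_Z$, the identity follows in one line. This also settles the off-diagonal check you deferred to the block theorem. That theorem's computation is right up to an omitted factor $k^2$. More generally, the off-diagonal block of $\begin{pmatrix}A&E\\0_2&D\end{pmatrix}^4$ is $\sum_{i=0}^{3}A^iED^{3-i}$, which is linear in $E$, so any $A,D$ work.

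\textbf{What each buys.} Your approach is more elementary and adapts at once to every $M_n(\mathbb Z)$ with $n\ge 2$. It is nontrivial in the sense that $X^4-Y^4=8m^3V\neq0$ for $m,k\neq0$. However, it never touches the arithmetic of fourth powers. The paper's approach gives invertible solutions with scalar fourth powers, under its hypotheses $\zeta\eta(2\zeta^4-\eta^4)\neq0$. It is also exactly the template reused for $M_4(\mathbb Q(\sqrt[4]{l}))$ and $M_4(\mathbb Q(\sqrt[4]{l_1},\sqrt[4]{l_2}))$.

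\textbf{On your Step 2.} Your diagnosis is correct. The all-ones theorem silently drops the $V^4$ term, and $J^4=64J\neq0$. Consequently no $U$ with vanishing row and column sums satisfies equation \ref{b} for $V=J$, and the paper's example there fails. You should delete your opening sentence. The detour through $UV=VU=0$ is also not needed for the final argument.
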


\begin{proof}
 Let $\zeta,\eta \in \mathbb Z \setminus{\{0}\}$ with $2\zeta^4-\eta^4 \neq 0,1.$ Again let $C_{\zeta,\eta}=\begin{pmatrix}
    0&1&0&0\\0&0&1&0\\0&0&0&1\\2\zeta^4-\eta^4&0&0&0
\end{pmatrix}$ ,we get $C_{\zeta,\eta}^4=(2\zeta^4-\eta^4)I_4$ implies $C_{\zeta,\eta}^4=2\zeta^4I_4-\eta^4I_4=2\begin{pmatrix}
    0&0&0&\zeta\\\zeta&0&0&0\\0&\zeta&0&0\\0&0&\zeta&0
\end{pmatrix}^4-\begin{pmatrix}
    0&0&\eta&0\\0&0&0&\eta\\\eta&0&0&0\\0&\eta&0&0
\end{pmatrix}^4.$ Hence $(C_{\zeta,\eta},D_\eta,E_\zeta)$ is the family of solutions for the Diophantine equation, where $D_\eta=\begin{pmatrix}
    0&0&\eta&0\\0&0&0&\eta\\\eta&0&0&0\\0&\eta&0&0
\end{pmatrix},E_\zeta=\begin{pmatrix}
    0&0&0&\zeta\\\zeta&0&0&0\\0&\zeta&0&0\\0&0&\zeta&0
\end{pmatrix}.$

\end{proof}

\begin{theorem}
 Let $l(\ne2)\in \mathbb Q  \setminus {\{0}\}$  be any non perfect square and 4th power free number.The field extension $\mathbb Q(\sqrt[4]l)$  has degree 4 over $\mathbb Q.$ Then the diophantine equation $X^4+Y^4=2Z^4$ has infinite number of  solutions in $M_4(\mathbb Q(\sqrt[4]l))$ for each such l.
\end{theorem}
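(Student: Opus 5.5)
We adapt the companion-matrix construction used in the preceding theorem for $M_4(\mathbb Z)$, now over $K=\mathbb Q(\sqrt[4]{l})$. Write $\alpha=\sqrt[4]{l}$. The idea is to find many triples $(\zeta,\eta,\gamma)$ of nonzero elements of $K$ for which the scalar $\gamma$ satisfies $\gamma^4 = 2\zeta^4-\eta^4$ in a way that builds in $\alpha$. We then realize each fourth power as the fourth power of an explicit $4\times 4$ matrix over $K$.

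The key matrices are the companion-type matrices
\[
C_c=\begin{pmatrix}0&1&0&0\\0&0&1&0\\0&0&0&1\\c&0&0&0\end{pmatrix},\qquad c\in K .
\]
Each satisfies $C_c^4=cI_4$. The cyclic permutation-type matrices $E_\zeta$ and $D_\eta$ from the previous proof satisfy $E_\zeta^4=\zeta^4 I_4$ and $D_\eta^4=\eta^4 I_4$. Moreover, $D_\eta^2=\eta^2 I_4$ already, so $D_\eta^4=\eta^4I_4$ holds over any field.

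\textbf{Step 1: choose the scalars.} For arbitrary nonzero rationals $\zeta,\eta$, set $c=2\zeta^4-\eta^4$. Taking $X=C_c$, $Y=D_\eta$, $Z=E_\zeta$ gives $X^4+Y^4 = 2Z^4$, exactly as in the previous theorem, with entries in $\mathbb Q\subset K$. To make the solutions genuinely depend on $l$, we scale instead. Let $\zeta=p\alpha$ and $\eta=q\alpha$ with $p,q\in\mathbb Q\setminus\{0\}$. Then $\zeta^4=p^4 l$ and $\eta^4=q^4 l$, so $c=(2p^4-q^4)\,l\in\mathbb Q$. Alternatively, keep $c$ itself involving $\alpha$ by taking $X=\alpha\, C_{c'}$. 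Then $X^4=l\,c' I_4$, and we set $c'=2p^4-q^4$.

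\textbf{Step 2: verify the identity.} We need $X^4+Y^4=2Z^4$, which here reads $l(2p^4-q^4)I_4 + q^4 l I_4 = 2p^4 l I_4$. This is immediate once the three fourth powers are computed as scalar matrices.

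\textbf{Step 3: infinitude and nontriviality.} Distinct choices of the pair $(p,q)$ give distinct triples, for instance because the entries $p\alpha$ of $Z$ differ. To ensure the triple is not a trivial scalar-type solution, we impose $2p^4-q^4\neq 0$, which always holds for rational $p,q\neq0$ since $2$ is not a fourth power in $\mathbb Q$. We also impose $2p^4-q^4\neq 1$, mirroring the previous theorem. The hypotheses on $l$ (not a square, fourth-power free, $l\neq 2$) are used only to guarantee $[K:\mathbb Q]=4$. This follows from the irreducibility of $x^4-l$ via Capelli's criterion: $l$ is not a square, and $-4l$ is not a fourth power. So these triples are genuinely matrices over a degree-$4$ field with irrational entries.

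The only delicate point is this degree claim. For negative $l$ one must check that $l\neq -4m^4$, and I would either add that to the hypotheses or note that the construction itself never uses the degree. The matrix identities hold in $M_4(F)$ for any field $F$ containing $\alpha$.
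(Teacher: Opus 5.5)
Your proof is correct. With $\alpha=\sqrt[4]{l}$, the three fourth powers of $X=\alpha C_{2p^4-q^4}$, $Y=D_{q\alpha}$ and $Z=E_{p\alpha}$ are the scalar matrices $l(2p^4-q^4)I_4$, $lq^4I_4$ and $lp^4I_4$. The identity $X^4+Y^4=2Z^4$ therefore holds.

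The paper uses the same mechanism: companion and scaled cyclic matrices have scalar fourth powers, which reduces everything to a scalar identity. It chooses a different, leaner triple: $X=C_l$ with corner entry $2-l$ (rational entries), $Y=L_l$ (the $4$-cycle scaled by $\sqrt[4]{l}$), and $Z=I_4$. The identity is then just $(2-l)+l=2$. That is where the hypothesis $l\neq 2$ enters: it keeps the corner entry of $C_l$ nonzero. It has nothing to do with the degree, as you suggest; $\mathbb Q(\sqrt[4]{2})$ already has degree $4$.

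The paper's proof yields only one triple per $l$. Its claim of infinitely many solutions for each $l$ therefore tacitly needs an extra step, such as rescaling all three matrices by $tI_4$. Your family, indexed by nonzero rationals $(p,q)$, is the earlier $M_4(\mathbb Z)$ triple $(C_{\zeta,\eta},D_\eta,E_\zeta)$ with $\zeta=p\alpha$, $\eta=q\alpha$. It settles infinitude for fixed $l$ directly. Distinct pairs give distinct triples via $Z$ or $Y$; the condition $2p^4-q^4\neq 1$ is unnecessary.

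As your Step 1 already shows, the rational solutions from that earlier theorem lie in $M_4(\mathbb Q(\sqrt[4]{l}))$. So the bare existence claim is immediate, and both constructions are really about exhibiting solutions that genuinely involve $\sqrt[4]{l}$.

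Your Capelli remark is a real improvement over the paper, which never proves the degree claim. That claim fails for $l=-4$: it is a non-square and fourth-power free, yet $x^4+4=(x^2+2x+2)(x^2-2x+2)$. As you note, the matrix identities do not depend on the degree.
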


\begin{proof}
Let $C_l=\begin{pmatrix}
    0&1&0&0\\0&0&1&0\\0&0&0&1\\2-l&0&0&0
\end{pmatrix}$ and assume $L_l=\begin{pmatrix}
    0&0&0&\sqrt[4]l\\\sqrt[4]l&0&0&0\\0&\sqrt[4]l&0&0\\0&0&\sqrt[4]l&0
\end{pmatrix}.$ Now $C_d^4=(2-d)I_4\implies C_4^d=2I_4-dI_4.$ Since $L_d^4=dI_4,$ we get $C_l^4+L_l^4=2I_4.$ Hence, $(C_l,L_l,I)$ is the family of  solutions for  each such l defined above.
   
\end{proof}

\begin{theorem}
 Let $l_1,l_2\in \mathbb Q  \setminus {\{0}\}$  be any non perfect square and 4th power free number with $l_1\ne 2l_2$.The field extension $\mathbb Q(\sqrt[4]l_1, \sqrt[4]l_2)$   over $\mathbb Q $ with $\sqrt[4]l_2 \notin\mathbb Q(\sqrt[4]l_1)$ has degree 16. Then the Diophantine equation $X^4+Y^4=2Z^4$ has infinite number of  matrix solutions in $M_4(\mathbb Q(\sqrt[4]l_1 , \sqrt[4]l_2 ))$ for each such  $l_1,l_2.$
 
\end{theorem}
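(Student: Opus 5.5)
The plan mirrors the proof of the preceding theorem for $\mathbb Q(\sqrt[4]l)$: build three $4\times 4$ matrices whose fourth powers are scalar matrices, and choose the scalars so that the identity $\alpha+\beta=2\gamma$ holds in $\mathbb Q(\sqrt[4]{l_1},\sqrt[4]{l_2})$. The basic tool is the cyclic companion-type matrix
\[
M(c)=\begin{pmatrix}0&0&0&c\\ c&0&0&0\\ 0&c&0&0\\ 0&0&c&0\end{pmatrix},
\]
for which $M(c)^4=c^4I_4$, since $M(c)=cP$ with $P$ a permutation matrix of a $4$-cycle, so $P^4=I_4$. I will also use the matrix
\[
C_{m}=\begin{pmatrix}0&1&0&0\\0&0&1&0\\0&0&0&1\\ m&0&0&0\end{pmatrix},
\]
whose fourth power is $mI_4$, as verified in the two previous theorems.

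The steps are as follows.
\begin{enumerate}
\item Set $Y=M(\sqrt[4]{l_1})$ and $Z=M(\sqrt[4]{l_2})$. These give $Y^4=l_1I_4$ and $Z^4=l_2I_4$, and both matrices lie in $M_4(\mathbb Q(\sqrt[4]{l_1},\sqrt[4]{l_2}))$.
\item Set $X=C_{2l_2-l_1}$. Its entries are rational, and $X^4=(2l_2-l_1)I_4$.
\item Add: $X^4+Y^4=(2l_2-l_1+l_1)I_4=2l_2I_4=2Z^4$.
\item The hypothesis $l_1\neq 2l_2$ guarantees $2l_2-l_1\neq 0$, so $X$ is invertible and the solution is nontrivial. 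This plays the same role as the condition $l\neq 2$ in the previous theorem.
\end{enumerate}

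To get infinitely many solutions, I scale as in the $M_2(\mathbb Z)$ and $M_3(\mathbb Z)$ theorems. For any scalar matrix $L=sI_4$ with $s$ in the field, $L$ commutes with everything, so $(LX)^4=s^4X^4$, and similarly for $Y$ and $Z$. Hence $(sX,sY,sZ)$ is a solution for every $s\in\mathbb Q\setminus\{0\}$, and distinct $s$ give distinct triples. In addition, the pair $(l_1,l_2)$ ranges over infinitely many admissible values, though for fixed $l_1,l_2$ the scaling argument already suffices.

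The degree-16 claim needs a separate justification if we want to prove it rather than cite it. Under the hypotheses, $[\mathbb Q(\sqrt[4]{l_1}):\mathbb Q]=4$ because $x^4-l_1$ is irreducible when $l_1$ is not a square and is fourth-power free (with the usual caveat $l_1\neq -4m^4$). One then needs $x^4-l_2$ to remain irreducible over $\mathbb Q(\sqrt[4]{l_1})$. I expect this to be the main obstacle: the hypothesis $\sqrt[4]{l_2}\notin\mathbb Q(\sqrt[4]{l_1})$ alone does not quite force degree $4$, since for instance $\sqrt{l_2}$ might lie in $\mathbb Q(\sqrt[4]{l_1})$.

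I would treat the degree as part of the stated hypothesis, reading the statement as ``assuming the extension has degree 16'', exactly as the previous theorem does. The matrix identity itself does not depend on the degree, only on the entries lying in the field. So the core of the proof is the four steps above plus the scaling argument.
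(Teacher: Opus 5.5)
Your argument is the paper's: the same companion matrix $C_{2l_2-l_1}$ for $X$ and the same cyclic matrix with entries $\sqrt[4]{l_2}$ for $Z$ (the paper uses the double-transposition pattern for $\sqrt[4]{l_1}$, which makes no difference). Your scaling by $sI_4$ actually supplies the infinitely many solutions for each fixed pair $l_1,l_2$, which the paper's proof, exhibiting only one triple per pair, leaves unstated; your caution about the degree is also justified, since e.g.\ $l_1=2$, $l_2=18$ meets all the hypotheses but $\mathbb Q(\sqrt[4]{2},\sqrt[4]{18})=\mathbb Q(\sqrt[4]{2},\sqrt{3})$ has degree $8$, and the paper does not prove the degree-16 claim either.
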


\begin{proof}
Let $C_{l_1,l_2}=\begin{pmatrix}
    0&1&0&0\\0&0&1&0\\0&0&0&1\\2l_2-l_1&0&0&0
\end{pmatrix} \in M_4(\mathbb Q(\sqrt[4]l_1 , \sqrt[4]l_2 )).$ Then, we get  $C_{l_1,l_2}^4=(2l_2-l_1)I_4.$ That implies $C_{l_1,l_2}^4=2l_2I_4-l_1I_4.$ Define $N_{l_2}=\begin{pmatrix}
    0&0&0&\sqrt[4]l_2\\\sqrt[4]l_2&0&0&0\\0&\sqrt[4]l_2&0&0\\0&0&\sqrt[4]l_2&0
\end{pmatrix},M_{l_1}=\begin{pmatrix}
    0&0&\sqrt[4]l_1&0\\0&0&0&\sqrt[4]l_1\\\sqrt[4]l_1&0&0&0\\0&\sqrt[4]l_1&0&0
\end{pmatrix}.$ As $2N_{l_2}^4-M_{l_1}^4=2l_2I_4-l_1I_4,$ Hence, we have $C_{l_1,l_2}^4=2N_{l_2}^4-M_{l_1}^4,$ This implies $C_{l_1,l_2}^4+M_{l_1}^4=2N_{l_2}^4.$ Thus we have $(C_{l_1,l_2},M_{l_1},N_{l_2})$ is the family of solutions for each such $l_1,l_2.$
\end{proof}
\section{Conclusion}
This article establishes that the equation $X^4+Y^4=2Z^4$ yields infinitely many matrix solutions. These valid configurations span across multiple integer matrix dimensions and extend into specialized degree-4 and degree-16 rational field extensions. This broadly expands the known algebraic outlines for resolving higher-degree Fermat-type equations beyond standard scalar limitations.

\textbf{Conflicts of interest :} The authors declare that they have no conflicts of interest to disclose.

\end{document}